\DeclareRobustCommand\longtwoheadrightarrow
\theoremstyle{plain}
\newtheorem{thm}{Theorem}[section]
\newtheorem*{thm*}{Theorem}
\newtheorem*{conj*}{Conjecture}
\newtheorem*{fact*}{Fact}
\newtheorem*{prop*}{Proposition}
\theoremstyle{definition}
\newtheorem{defn}[thm]{Definition}
\newtheorem*{defn*}{Definition}
\newtheorem*{theorem*}{Theorem}
\newtheorem{rem}[thm]{Remark}
\newtheorem{prop}[thm]{Proposition}
\newtheorem{theorem}[thm]{Theorem}
\newtheorem{lemma}[thm]{Lemma}
\newtheorem{cor}[thm]{Corollary}
\theoremstyle{remark}
\newcommand{\loc}[1]{#1_{\mathfrak{p}}}
\DeclareMathOperator{\Ext}{Ext}
\DeclareMathOperator{\Hom}{Hom}
\DeclareMathOperator{\res}{res}
\DeclareMathOperator{\identity}{id}
\DeclareMathOperator{\im}{im}
\DeclareMathOperator{\spec}{Spec}
\title{Arithmetic Invariant Theory of Reductive Groups}
\author{Yidi Wang}
\subjclass[2020]{13A50, 14L30, 20G35}
\keywords{Invariant theory, reductive groups, group schemes, Cohen--Macaulay rings.}
\begin{document}

\begin{abstract}
    In this manuscript, we define the notion of linearly reductive groups over commutative unital rings and study the Cohen--Macaulay property of the ring of invariants under rational actions of a linearly reductive group. Moreover, we study the equivalence of different notions of reductivity over regular rings of Krull dimension two by studying these properties locally. 
\end{abstract}

\maketitle

\section{Introduction}

Invariant theory over fields, especially over fields of characteristic zero, was much studied in the previous century. 
In general, one studies the structural properties of the ring of invariants~$S^G$ of a group~$G$ acting rationally on a ring~$S$. 
One may refer to \cite{cit} or \cite{dolgachev_2003} for the classical theory. 
In recent years, Almuhaimeed and Mundelius generalized some of the classical results to certain rings for finite groups $G$ in their Ph.D. theses (see \cite{AA2018} and \cite{mundelius}).
The ring of invariants~$S^G$ also has rich properties when the group~$G$ is a reductive algebraic group. 
There are in fact several different notions of reductive groups whose relationships have been studied in the literature: Miyata--Nagata proved that the notions of linearly reductive and reductive are equivalent over fields of characteristic zero and that linearly reductive implies reductive over fields of positive characteristic. 
They also showed that geometrically reductive implies reductive (\cite{Nagata_Miyata}). 
Haboush later showed the converse, thereby giving an affirmative answer to Mumford's Conjecture (\cite{haboush}). 
For different notions of reductive groups over rings, following Haboush's setting, Seshadri generalized geometric reductivity to smooth algebraic group schemes with connected geometric fibres over Noetherian rings and proved that reductive group schemes are geometrically reductive (\cite{seshadri}). 
The converse is unknown in general. In \cite{Franjou_VanderKallen}, Franjou--van der Kallen formulated power reductivity over arbitrary bases which is compatible with the setup in Mumford's Conjecture, and proved that reductive group schemes are power reductive and that power reductive group schemes are geometrically reductive. 
The converse is also unknown in general. 
One goal of this paper is to study the unknown directions over certain classes of rings. 

In  Section \ref{sec_lrg}, we introduce the notion of linearly reductive groups, and then compare different notions of reductive groups over certain commutative unital rings. 
A similar notion was also studied in \cite{hashimoto}, but we focus differently on the properties of representations of linearly reductive groups in analogy to what is known over fields. Over fields, it is well-known that~$G$ is linearly reductive if and only if there exists a \emph{Reynolds operator} for all~$G$-modules~$V$, or equivalently,  its invariant module~$V^G$ splits in~$V$.
We extend this result to certain commutative rings:
\begin{thm*}[\ref{theorem:reynolds}]
    Let~$G$ be a smooth affine algebraic group scheme over a commutative ring with connected geometric fibres. If a functorial Reynolds operator exists for~$G$, then~$G$ is linearly reductive. The converse holds if we further assume~$k$ is a Dedekind domain.
\end{thm*}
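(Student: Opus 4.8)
I would first note that the invariants functor $(-)^G\colon G\text{-Mod}\to k\text{-Mod}$ is right adjoint to the functor sending a $k$-module to the same module with trivial $G$-action, and is therefore automatically left exact; concretely $V^G=\ker\bigl(\rho_V-(\identity_V\otimes 1)\bigr)$, where $\rho_V\colon V\to V\otimes_k k[G]$ is the coaction. Hence the exactness of $(-)^G$ defining linear reductivity reduces to showing that $(-)^G$ preserves surjections. So let $\pi\colon V\longtwoheadrightarrow W$ be a surjection of $G$-modules and let $w\in W^G$. Choosing any preimage $v\in V$ of $w$ and writing $R$ for the functorial Reynolds operator, the naturality of $R$ applied to $\pi$ gives $\pi\bigl(R_V(v)\bigr)=R_W\bigl(\pi(v)\bigr)=R_W(w)=w$, the last equality because $R_W$ restricts to the identity on $W^G$. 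Since $R_V(v)\in V^G$, this shows $V^G\to W^G$ is surjective, so $(-)^G$ is exact and $G$ is linearly reductive. This half uses neither smoothness nor the Dedekind hypothesis.

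\textbf{Converse: reduction to an integral.} For the converse the plan is to reduce the construction of a functorial Reynolds operator to a single normalized two-sided integral, i.e.\ a $k$-linear map $\textstyle\int\colon k[G]\to k$ with $\int(1)=1$ satisfying $(\identity\otimes\int)\circ\Delta=\eta\circ\int=(\int\otimes\identity)\circ\Delta$, where $\eta\colon k\to k[G]$ is the unit. Connectedness of the geometric fibres ensures $k[G]^G=k$, so such an integral is exactly a $G$-equivariant retraction of the unit inclusion $k\hookrightarrow k[G]$, that is, the Reynolds operator on the regular representation. Given it, I would set $R_V\colonequals(\identity_V\otimes\int)\circ\rho_V\colon V\to V$ for every $G$-module $V$: the integral identities show $R_V$ lands in $V^G$, the normalization $\int(1)=1$ shows $R_V$ is the identity on $V^G$, and the comodule axioms $\rho_{V'}\circ f=(f\otimes\identity)\circ\rho_V$ show $V\mapsto R_V$ is natural. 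Thus it suffices to split the inclusion $k\hookrightarrow k[G]$.

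\textbf{Converse: constructing the integral over a Dedekind base.} Here linear reductivity and the Dedekind hypothesis combine. Over the fraction field $K=\mathrm{Frac}(k)$ the base change $G_K$ remains linearly reductive, so the field case (Maschke's theorem for cosemisimple Hopf algebras, equivalently Miyata--Nagata) splits $K\hookrightarrow K[G]$ equivariantly and produces a \emph{unique} normalized two-sided integral $\int_K\colon K[G]\to K$. The strategy is then to descend $\int_K$ to $k$. Since $k$ is Dedekind, $k=\bigcap_{\mathfrak p}k_{\mathfrak p}$ over the height-one primes and each $k_{\mathfrak p}$ is a DVR, so it is enough to prove $\int_K\bigl(k[G]\bigr)\subseteq k_{\mathfrak p}$ for every $\mathfrak p$. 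Working over the DVR $k_{\mathfrak p}$, I would use that linear reductivity localizes to $k_{\mathfrak p}$ and specializes to the residue field $\kappa(\mathfrak p)$ (from the local study in Section~\ref{sec_lrg}), so the splitting exists both generically and on the special fibre; the one-dimensionality and uniqueness of the normalized integral over each of $K$ and $\kappa(\mathfrak p)$ then forces $\int_K$ to be integral at $\mathfrak p$, using that over a DVR a finitely generated torsion-free module is free, so that $k_{\mathfrak p}[G]$-level maps are pinned down by their generic fibre together with a single integrality condition. Gluing over all $\mathfrak p$ yields $\int\colon k[G]\to k$.

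\textbf{Main obstacle.} The crux is this descent step: promoting the field-level integral to the Dedekind base and verifying simultaneously that it is integral at every height-one prime and correctly normalized over $k$. This hinges on the local behaviour of linear reductivity and on the local-global principle valid over Dedekind domains (torsion-free $=$ projective, and maps determined by their localizations). It is precisely the rigidity of the normalized integral over a field, namely its uniqueness, together with the one-dimensional local structure of a Dedekind domain, that makes ``Dedekind'' the natural hypothesis; I would not expect the argument to survive in higher Krull dimension without substantial additional input.
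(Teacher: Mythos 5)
Your forward direction is correct, and is in fact more direct than the paper's: the paper applies the Reynolds operator only to $V=k[G]$ to split off the trivial summand $k$, and then kills $H^1(G,V)$ as a direct summand of $H^1(G,V\otimes_k k[G])=0$ (Proposition \ref{reynolds_to_LR}), whereas your naturality computation $\pi(\mathcal{R}_V(v))=\mathcal{R}_W(\pi(v))=w$ proves exactness of $(-)^G$ directly, which is equivalent to the vanishing of $H^1(G,-)$ used as the definition. Your first reduction in the converse also agrees with the paper (Proposition \ref{lem_functorial}): everything comes down to a $G$-equivariant retraction $k[G]\to k[G]^G=k$; note that the two-sidedness of the resulting integral, which you assert, is supplied in the paper by the uniqueness of the $G$-stable complement (Lemma \ref{unique_complement}).

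The gap is the descent step, which you correctly identify as the crux but do not prove, and whose proposed mechanism does not work. The existence and uniqueness of normalized integrals over $K$ and over $\kappa(\mathfrak{p})$ do not force $\int_K$ to be integral at $\mathfrak{p}$. Restricted to a finitely generated $G$-submodule $V\subseteq k_{\mathfrak{p}}[G]$ containing $1$, the map $\int_K$ is the projection of $V\otimes_{k_{\mathfrak{p}}} K$ onto $K\cdot 1$ along the canonical $G$-complement, and a projection along a $K$-splitting need not preserve an $R$-lattice: over a DVR $R$ with uniformizer $t$, take $V=R^2\subseteq K^2=K(1,0)\oplus K(1,t)$; the projection onto the first line sends $(0,1)$ to $(-t^{-1},0)\notin V$. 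So integrality of $\int_K$ at $\mathfrak{p}$ is precisely the assertion that $V^{G}$ admits a $G$-stable complement in $V$ over the DVR --- the statement to be proved --- and neither uniqueness of the integral over the fraction and residue fields nor ``torsion-free equals free'' produces it. The paper constructs this complement directly over the Dedekind base, with no localization: $k[G]$ is $k$-projective, so each finitely generated $G$-submodule $V$ is projective; $H^1(G,V)=0$ gives the exact sequence $0\to V^G\to V\to Z^1(G,V)\to 0$ with $Z^1(G,V)$ projective, hence $\Ext^1_k(Z^1(G,V),V^G)=0$; applying $(-)^G$ to the resulting short exact sequence of $\Hom$-modules lifts $\identity_{V^G}$ to a $G$-equivariant retraction $V\to V^G$, and one passes to the direct limit over such $V$. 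If you insist on localizing, you would still have to run an argument of this kind over each $k_{\mathfrak{p}}$, so the detour through $K$ gains nothing.
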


The Cohen--Macaulay property measures the local equidimensionality of a ring and therefore has rich applications in algebraic geometry. 
Hochster--Roberts showed that when~$G$ is linearly reductive over a field~$F$,~$F[V]^G$ is Cohen--Macaulay for all rational representations~$V$ (\cite{Hochster-Robert}). 
Over~$\mathbb{Z}$, such a property was also studied in \cite{AA2018} for finite groups. 
Hochster--Huneke also proved that~$F[X]^G$ is Cohen--Macaulay for all smooth~$G$-varieties~$X$ (\cite{hochster-huneke}). 
We generalize Hochster--Huneke's result to Dedekind domains in Section~\ref{subsection:cm}, using our new definition of linearly reductive groups.

\begin{thm*}[\ref{CM}]
    Let~$k$ be a Dedekind domain and let~$G$ be a linearly reductive group over~$k$. Let~$X \rightarrow k$ be a smooth affine algebraic~$G$-scheme with connected generic fibre~$X_{\operatorname{Frac}(k)}$. Let ~$k[X]$ be the coordinate ring of~$X$. Then~$k[X]^G$ is Cohen--Macaulay.
\end{thm*}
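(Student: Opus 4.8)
The plan is to reduce the statement to the theorem of Hochster--Huneke over a field, applied fibre by fibre over $\spec k$, and then to lift the Cohen--Macaulay property to $k[X]^G$ using flatness over the one-dimensional regular base $k$. Write $S := k[X]$ and $R := S^G$. Since $X\to\spec k$ is smooth and $k$ is regular, the total coordinate ring $S$ is a regular ring; in particular it is flat, hence torsion-free, over $k$. As $R\subseteq S$ is a $k$-submodule of a torsion-free module over the Dedekind domain $k$, it is itself torsion-free and therefore flat over $k$. I would also record at the outset that $R$ is Noetherian (finite generation of invariants for a linearly reductive, hence geometrically reductive, group over the Noetherian base $k$), so that the local criteria for Cohen--Macaulayness apply.

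First I would invoke the functorial Reynolds operator $E\colon S\to R$ supplied by Theorem~\ref{theorem:reynolds} (available precisely because $k$ is a Dedekind domain). It is an $R$-linear, $G$-equivariant projection, so it splits $R$ off $S$ as a $k$-module: $S = R\oplus W$ with $W=\ker E$ a $G$-submodule satisfying $W^G=0$, and with all three modules flat over $k$. One could at this point hope to finish via the principle that a direct summand of a regular ring is Cohen--Macaulay; but over a Dedekind domain of mixed characteristic this requires the recent perfectoid and big Cohen--Macaulay technology, so I instead take the more self-contained fibrewise route below.

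The heart of the argument is a base-change statement for invariants: for every prime $\mathfrak{p}\in\spec k$ with residue field $\kappa(\mathfrak{p})$, the natural map $R\otimes_k\kappa(\mathfrak{p})\to\bigl(S\otimes_k\kappa(\mathfrak{p})\bigr)^{G_{\kappa(\mathfrak{p})}}=\kappa(\mathfrak{p})[X_{\mathfrak{p}}]^{G_{\mathfrak{p}}}$ should be an isomorphism. Base-changing $S=R\oplus W$ gives a $G_{\kappa(\mathfrak{p})}$-equivariant splitting $S\otimes_k\kappa(\mathfrak{p})=(R\otimes_k\kappa(\mathfrak{p}))\oplus(W\otimes_k\kappa(\mathfrak{p}))$ in which $R\otimes_k\kappa(\mathfrak{p})$ carries the trivial action, so the claim reduces to showing $(W\otimes_k\kappa(\mathfrak{p}))^{G_{\mathfrak{p}}}=0$. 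This is exactly where linear reductivity over $k$ enters in its strong form: the exactness of $(-)^G$ (equivalently the vanishing of the higher cohomology of $G$), together with the flatness of $W$, lets one run the standard cohomology-and-base-change argument and conclude $(W\otimes_k\kappa(\mathfrak{p}))^{G}=W^G\otimes_k\kappa(\mathfrak{p})=0$. I expect this step---pushing invariants through the non-flat specialization $k\to\kappa(\mathfrak{p})$---to be the main obstacle, and the place where the Dedekind hypothesis and the functoriality of the Reynolds operator are genuinely needed.

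Granting the base change, each fibre $R\otimes_k\kappa(\mathfrak{p})$ is the ring of invariants of the linearly reductive group $G_{\mathfrak{p}}$ (linear reductivity is stable under the base change $k\to\kappa(\mathfrak{p})$) acting on the regular $\kappa(\mathfrak{p})$-algebra $\kappa(\mathfrak{p})[X_{\mathfrak{p}}]$ (smoothness is preserved by base change), so the theorem of Hochster--Huneke over the field $\kappa(\mathfrak{p})$ shows that every fibre $R\otimes_k\kappa(\mathfrak{p})$ is Cohen--Macaulay; here the assumed connectedness of the generic fibre $X_{\operatorname{Frac}(k)}$ guarantees that $X$ is irreducible and lets me invoke the field statement in the form I need. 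Finally I would glue: for a prime $\mathfrak{q}\subset R$ lying over $\mathfrak{p}\subset k$, the localization $R_{\mathfrak{q}}$ is flat over the regular, hence Cohen--Macaulay, local ring $k_{\mathfrak{p}}$, and its closed fibre is a localization of the Cohen--Macaulay ring $R\otimes_k\kappa(\mathfrak{p})$, hence Cohen--Macaulay. The local criterion for a flat local homomorphism (Matsumura, \emph{Commutative Ring Theory}, Thm.~23.3) then yields that $R_{\mathfrak{q}}$ is Cohen--Macaulay for every $\mathfrak{q}$, that is, $R=k[X]^G$ is Cohen--Macaulay.
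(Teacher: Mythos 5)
Your proof is correct, and its overall strategy coincides with the paper's: reduce to Hochster--Huneke over the residue fields of $k$ and then lift the Cohen--Macaulay property across the one-dimensional base. The technical implementation differs at both key steps, though. For the base change of invariants to $\kappa(\mathfrak{p})$, the paper simply cites \cite[Proposition 4.18]{Jantzen} (which is the $H^1$-vanishing argument in disguise), whereas you first split $S=R\oplus W$ via the functorial Reynolds operator of Theorem~\ref{theorem:reynolds}(b) and then kill $(W\otimes_k\kappa(\mathfrak{p}))^{G}$ using exactness of invariants on the surjection $W\twoheadrightarrow W/\mathfrak{p}W$; this is a self-contained substitute that makes visible exactly where the Dedekind hypothesis enters. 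For the lifting step, the paper first reduces to the case of a discrete valuation ring and a domain (using the connected generic fibre to get a nonzerodivisor $x$ generating $\mathfrak{m}$), applies Lemma~\ref{cm_flat} for the localization and \cite[Theorem 2.1.3(a)]{bruns_herzog} for the quotient by the regular element, and treats the generic-fibre maximal ideals by a separate application of Lemma~\ref{cm_flat}; you instead invoke the flat local homomorphism criterion (Matsumura, Theorem~23.3) uniformly for all primes, which avoids the reduction to a domain and the tensor-product lemma entirely, at the cost of needing flatness of $R$ over $k$ (immediate from torsion-freeness over a Dedekind domain) and Noetherianity of $R$ (which you correctly supply via finite generation of invariants for power reductive groups, a point the paper leaves implicit). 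Both routes are sound; yours is arguably cleaner in that it handles the generic and closed fibres by one criterion, while the paper's avoids any appeal to the Reynolds operator or to finite generation.
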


In Section \ref{sec_rg}, we compare our linear reductivity to other notions of reductivity and show the converse of Seshadri and Franjou--van der Kallen's results for regular rings of Krull dimension at most two.

\begin{thm*}[Proposition \ref{PR_GR}, Corollary \ref{GR_implies_PR}, Proposition \ref{PR_deg_one_property}]
Let~$G$ be a flat affine algebraic group scheme over a commutative unital ring~$k$.
    \begin{enumerate}[label = (\alph*)]
        \item Over a commutative ring~$k$, a linearly reductive group ~$G$ is reductive. The converse is true if~$char(\kappa(\mathfrak{p})) = 0$ for all~$\mathfrak{p} \in \operatorname{Spec}(k)$, where~$\kappa(\mathfrak{p})$ denotes the residue field at~$\mathfrak{p}.$
        \item We further assume that~$G$ is smooth with connected geometric fibres. Over a regular ring of Krull dimension at most two, the notions of reductive groups, power reductive groups and geometrically reductive groups coincide.
    \end{enumerate} 
\end{thm*}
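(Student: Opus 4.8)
The plan is to exploit the chain of implications already available in the literature and close it into a cycle. For a smooth affine group scheme, Seshadri together with Franjou--van der Kallen give reductive $\Rightarrow$ power reductive $\Rightarrow$ geometrically reductive; moreover over any field the theorems of Nagata, Miyata and Haboush collapse reductive, power reductive, geometrically reductive and linearly reductive to the single classical notion. Thus the whole problem reduces to (i) closing the loop back to ``reductive'' and (ii) transporting the field-level equivalences along the structure map $X \to \spec(k)$, that is, between $G$ and its geometric fibres $G_{\overline{\kappa(\mathfrak{p})}}$.

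For part (a), to see that linearly reductive $\Rightarrow$ reductive I would argue fibrewise: linear reductivity is inherited under base change, so each geometric fibre $G_{\overline{\kappa(\mathfrak{p})}}$ is linearly reductive over a field and hence reductive by Nagata--Miyata; since reductivity in the sense used here is a condition on the geometric fibres, $G$ is reductive. For the converse under the hypothesis $\operatorname{char}\kappa(\mathfrak{p}) = 0$ for all $\mathfrak{p}$, each fibre is reductive over a field of characteristic zero and therefore linearly reductive by the classical Weyl/Nagata--Miyata theorem; the task is then to reassemble fibrewise splittings into a global one. Here I would invoke the functorial Reynolds operator of Theorem~\ref{theorem:reynolds}: the characteristic-zero hypothesis on the residue fields removes the torsion obstructions, so that the splitting of $V^G$ inside $V$ can be produced globally and $G$ is linearly reductive.

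For part (b), the smoothness and connected-fibre hypotheses mean that $G$ is reductive precisely when every geometric fibre is a connected reductive group. Given the Franjou--van der Kallen chain, it suffices to establish the two remaining arrows over a regular ring of Krull dimension at most two: geometrically reductive $\Rightarrow$ power reductive (Corollary~\ref{GR_implies_PR}) and power reductive $\Rightarrow$ reductive. The second I would handle by descent to fibres, since power reductivity is stable under the base changes $k \to \kappa(\mathfrak{p})$: each geometric fibre is then power reductive over a field, hence reductive, whence $G$ is reductive. The first, and the crux of the theorem, is the passage from geometric to power reductivity. Power reductivity concerns arbitrary surjections of $G$-modules onto a line bundle, whereas geometric reductivity only controls finite-dimensional surjections onto the trivial module; I would bridge this gap using the degree-one reformulation of Proposition~\ref{PR_deg_one_property}, reducing an arbitrary surjection to finite-dimensional ones by a direct-limit argument and trivializing the line-bundle target by localizing at primes of $k$.

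The main obstacle is precisely this last step, together with pinning down where Krull dimension at most two is indispensable. After localizing we may work over a regular local ring of dimension at most two, where every invertible module is free, so that geometric reductivity produces at each localization an invariant mapping to a generator; the difficulty is to patch these local invariants into a single global invariant in a divided or symmetric power. I expect this to rest on the fact that over a regular ring of dimension at most two the relevant invariant modules are reflexive and hence recovered from their localizations at primes of height at most one, so that agreement in codimension one forces a genuine global section --- a depth and Cohen--Macaulay argument in the spirit of Section~\ref{subsection:cm}. In dimension three or higher this reflexive-gluing mechanism fails, which is exactly why the equivalence is asserted only up to Krull dimension two.
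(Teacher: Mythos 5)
Your overall skeleton --- run the known chain reductive $\Rightarrow$ power reductive $\Rightarrow$ geometrically reductive, close the loop, and transport everything through the geometric fibres --- is the right frame, and your fibrewise arguments for ``linearly reductive $\Rightarrow$ reductive'' and ``power reductive $\Rightarrow$ reductive'' match the paper (Proposition \ref{LRG_base_change}, Corollary \ref{PR_R}, Nagata--Miyata). But the two steps you flag as the crux are where the proposal has genuine gaps. First, for the converse in (a) you invoke Theorem \ref{theorem:reynolds} to ``reassemble fibrewise splittings into a global one''; that theorem goes the wrong way for this purpose: its part (b) \emph{assumes} $G$ is linearly reductive (and $k$ Dedekind) and produces a Reynolds operator, so it cannot be used to deduce linear reductivity from properties of the fibres. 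The paper instead gets this direction from the right-hand column of its summary diagram: reductive geometric fibres in characteristic zero are linearly reductive (Nagata--Miyata), and linear reductivity is detected on geometric fibres (the cited result of Margaux on vanishing of Hochschild cohomology, for Noetherian $k$). You would need to supply such a fibrewise-detection statement; ``the characteristic-zero hypothesis removes the torsion obstructions'' is not an argument.

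Second, and more seriously, for geometrically reductive $\Rightarrow$ power reductive you locate the difficulty in the wrong place. You treat localization of geometric reductivity as free (``after localizing we may work over a regular local ring'') and put the work into patching local invariants by a reflexivity/Cohen--Macaulay argument. In the paper the situation is exactly reversed: the hard step is Theorem \ref{global_to_local}, showing that (strong) geometric reductivity \emph{descends to localizations} --- an invariant in $S^d(M^{\ast})^{G_{k_{\mathfrak{p}}}}$ does not obviously arise from one defined over $k$, and producing it requires Thomason's equivariant resolution property (Lemma \ref{lemma:thomason_equivariant}), which is where ``regular of Krull dimension at most two'' actually enters --- not through reflexive gluing in codimension one. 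Once each $G_{k_{\mathfrak{m}}}$ is geometrically reductive, van der Kallen's argument over a local base gives power reductivity of each $G_{k_{\mathfrak{m}}}$ (Proposition \ref{PR_GR}), and the patching you worry about is already handled by Franjou--van der Kallen: power reductivity is a local-to-global property, so no gluing of invariants (whose degrees $d$ could differ from prime to prime, with no compatibility on overlaps) is ever needed. Finally, Proposition \ref{PR_deg_one_property} (power reductivity of degree one equals linear reductivity) plays no role in this implication; it only serves to place linear reductivity into the diagram in part (a).
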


\noindent Note that the second part of this result was previously shown in \cite{Alper_GR}, using the language of stacks. In our approach, the key step by to obtaining the second part is by a different argument to show that geometric reductivity is a local property (Theorem \ref{global_to_local}), which does not rely on the knowledge of stacks.

\subsection*{Acknowledgements} This manuscript contains results from the author's doctoral dissertation under the supervision of Julia Hartmann at the University of Pennsylvania. The author thanks her for the valuable mathematical discussions as well as her patient advising and generous support. The author would also like to thank David Harbater for helpful discussions, Daniel Krashen for pointing out a problem in the earlier version of this manuscript, and Man Cheung Tsui for carefully checking Theorem ~\ref{global_to_local}. The author was partially supported by the NSF grant DMS-2102987.

\section{Preliminaries for Representations of affine group schemes}
Throughout this manuscript, we work with unital rings. In this section, we recall some definitions and properties of representations of affine group schemes over rings from the literature. Throughout this section, let~$k$ be a commutative ring unless specified.

\subsection{Representations of algebraic groups}\label{sec_affine_group_scheme}

Let~$G$ be an affine group scheme over~$k$. Let~$V$ be a~$k$-module. A \emph{$G$-module} structure on~$V$ is a natural transformation~$r_V: G \rightarrow \textnormal{GL}(V)$, i.e.,~$G(A)$ acts~$A$-linearly on~$V\otimes_k A$, for all~$k$-algebras~$A$. A~$k$-submodule~$W$ of~$V$ is~$G$-\emph{stable} (or a~$G$-\emph{submodule} of~$V$) if there is a natural transformation~$r_W: G \rightarrow \textnormal{GL}(W).$

There is also a Hopf-algebraic approach of describing~$G$-modules. Let~$G$ be an affine group scheme over~$k$. A~$G$-module structure on a~$k$-module on~$V$ corresponds to a~$k$-linear map~$\Delta_V: V  \rightarrow V \otimes k[G]$, called the \emph{comodule map} of a~$G$-module~$V$, such that the following two diagrams commute.
\begin{center}
    \begin{tikzcd}[column sep = large]
        &V \arrow{r}{\Delta_V}\arrow{d}{\Delta_V} &V \otimes k[G]\arrow{d}{\operatorname{id}_V\otimes \Delta_{G}} &V \arrow{r}{\Delta_V}\arrow[equal]{d} &V \otimes k[G] \arrow{d}{\operatorname{id}\otimes \epsilon}\\
        &V \otimes k[G] \arrow{r}{\Delta_V\otimes \operatorname{id}_{k[G]}} &V \otimes k[G] \otimes k[G], &V \arrow{r}{} &V \otimes k
    \end{tikzcd}
\end{center} Here~$\Delta_{G}$ denotes the comultiplication map and~$\epsilon$ denotes the antipode map. One may refer to \cite[Section 2.8]{Jantzen} for details.

A homomorphism of~$k$-modules~$\varphi \colon V \rightarrow W$ is a \emph{homomorphism of~$G$-modules} if the following diagram is commutative.
    \begin{equation*}\label{G-mod_homo}
        \begin{tikzcd}
            &V \arrow{r}{\Delta_V}\arrow{d}{\varphi} &V \otimes k[G]\arrow{d}{\varphi \otimes \operatorname{id}_{k[G]}}\\
            &W \arrow{r}{\Delta_W} &W \otimes k[G]
        \end{tikzcd}
    \end{equation*}

    \begin{rem}\label{alg_operation}
        The~$G$-module structure is preserved under taking tensor product, direct sum, symmetric and exterior powers of one or several~$G$-modules. If a~$G$-module~$M$ is finitely generated and projective as a $k$-module, the module~$\Hom_k(M, N)$ is a~$G$-module in a natural way. In particular, the dual module~$M^{\ast}$ is naturally a~$G$-module. See for example, \cite[Section 2.7]{Jantzen}.
    \end{rem}

    \begin{lemma}~\label{dual_proj}
    Let~$s \colon N \to M$ be a homomorphism of~$G$-modules over a commutative ring~$k$. Suppose both~$M$ and~$N$ are finitely generated and projective as~$k$-modules. Then the induced map~$s^{\ast} \colon M^{\ast} \rightarrow N^{\ast}$ is~$G$-equivariant.
    \end{lemma}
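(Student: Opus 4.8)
The plan is to verify $G$-equivariance using the functorial (points-valued) description of $G$-modules, which is equivalent to the comodule description recalled above. Write $s_A \colon N \otimes_k A \to M \otimes_k A$ for the base change $s \otimes_k \operatorname{id}_A$. The hypothesis that $s$ is a homomorphism of $G$-modules says precisely that $s_A$ is $G(A)$-equivariant for every $k$-algebra $A$; that is, $s_A(g \cdot n) = g \cdot s_A(n)$ for all $g \in G(A)$ and $n \in N \otimes_k A$, and these identities are natural in $A$. So it suffices to produce, naturally in $A$, an identification of the dual module structures under which the transpose of $s_A$ is seen to be $G(A)$-equivariant; naturality then repackages the whole family into the commuting comodule square that defines a homomorphism of $G$-modules.

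Next I would unwind the dual $G$-module structure supplied by Remark \ref{alg_operation}. Since $M$ and $N$ are finitely generated and projective over $k$, the canonical maps $M^* \otimes_k A \to (M \otimes_k A)^\vee$ and $N^* \otimes_k A \to (N \otimes_k A)^\vee$, where $(-)^\vee \colonequals \Hom_A(-,A)$, are isomorphisms and are natural in $A$. Under these identifications the $G(A)$-action on $M^* \otimes_k A$ is the contragredient action $g \cdot \phi = \phi \circ g^{-1}$, and the base change $s^* \otimes_k \operatorname{id}_A$ becomes the $A$-linear transpose $s_A^{*}\colon \phi \mapsto \phi \circ s_A$. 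With these descriptions the equivariance of $s^*$ is a one-line computation: for $g \in G(A)$ and $\phi \in (M \otimes_k A)^\vee$,
\[
s_A^{*}(g \cdot \phi) = (\phi \circ g^{-1}) \circ s_A = \phi \circ (s_A \circ g^{-1}) = (\phi \circ s_A) \circ g^{-1} = g \cdot s_A^{*}(\phi),
\]
where the middle equality is exactly the $G(A)$-equivariance of $s_A$ applied to $g^{-1}$. As this holds for every $A$ and is natural in $A$, the transpose $s^*$ intertwines the comodule maps $\Delta_{M^*}$ and $\Delta_{N^*}$, which is the assertion.

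The only real content, and the step I would be most careful about, is the second paragraph: one must check that the contragredient action coming from the points-valued picture agrees with the $G$-module structure that Remark \ref{alg_operation} puts on $M^* = \Hom_k(M,k)$ through the comodule formalism (this is where the antipode $S$ of the Hopf algebra $k[G]$ enters), and that the identification $M^* \otimes_k A \cong (M \otimes_k A)^\vee$ is natural in $A$ and compatible with evaluation. Both facts rely essentially on $M$ (resp. $N$) being finitely generated and projective — indeed the dual need not even carry a $G$-module structure otherwise — and either can be reduced to the free case by choosing a dual basis, or checked directly from the explicit formula $\Delta_{M^*}(e_i^*) = \sum_j e_j^* \otimes S(a_{ij})$ for the matrix coefficients $a_{ij}$ of $\Delta_M$ relative to a dual basis $\{e_i, e_i^*\}$. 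Once this compatibility is established, the displayed computation completes the proof; alternatively, one could dispense with the functorial language entirely and verify the comodule square for $s^*$ by a direct Sweedler-type calculation using $\Delta_M \circ s = (s \otimes \operatorname{id}_{k[G]}) \circ \Delta_N$ together with the antipode axioms, but I expect the functorial route above to be shorter and more transparent.
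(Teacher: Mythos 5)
Your proposal is correct and follows essentially the same route as the paper: identify $M^{\ast}\otimes_k A\cong(M\otimes_k A)^{\ast}$ using finite generation and projectivity, describe the dual $G$-module structure as the contragredient action $g\cdot\phi=\phi\circ g^{-1}$, and verify equivariance of the transpose by the one-line computation using the $G(A)$-equivariance of $s_A$. You are somewhat more explicit than the paper about the naturality in $A$ and the compatibility of the contragredient action with the comodule/antipode description, points the paper leaves implicit.
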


\begin{proof}
    For any~$k$-algebra~$A$,~$M^{\ast} \otimes_k A \cong (M\otimes_k A)^{\ast}$ by \cite[Chapter II, §5.4, Prop.8]{bourbaki1998algebra}. The~$G$-module structure on~$M^{\ast}$ is given by~$g \cdot f(m) \coloneqq f(g^{-1}m)$ for all~$m \in M \otimes_k A$ and~$g \in G(A)$. To check that~$s^{\ast}$ defines a~$G$-module homomorphism, one checks that~$g \cdot s^{\ast}(f)(n) = s^{\ast}(g\cdot f)(n)$.
\end{proof}

    A~$G$-module~$V$ is \emph{locally finite} if for all~$v \in V$, there exists a~$G$-stable submodule~$W$ of $V$ that is finitely generated over $k$ such that~$v \in W$.

    \begin{lemma}\label{locally_finiteness}
        Any~$G$-module~$V$ is an inductive limit of ~$G$-modules that are finitely generated over $k$ if~$k$ is Noetherian and~$G$ is flat. In particular,~$V$ is locally finite. 
    \end{lemma}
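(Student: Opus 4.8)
The plan is to reduce everything to a single pointwise statement: that every $v\in V$ lies in a $G$-submodule of $V$ that is finitely generated over $k$. Granting this, the lemma follows formally. The collection $\Sigma$ of all finitely generated $G$-submodules of $V$ is directed under inclusion, since the sum $W_1+W_2$ of two such submodules is again a $G$-submodule (its comodule map is the restriction of $\Delta_V$) and is again finitely generated over $k$; and $\bigcup_{W\in\Sigma}W=V$ by the pointwise statement. Hence $V=\varinjlim_{W\in\Sigma}W$ is the asserted inductive limit, and local finiteness is immediate because any given $v$ already sits in one such $W$.

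So I would fix $v\in V$ and write $\Delta_V(v)=\sum_{i=1}^n v_i\otimes a_i$ with $v_i\in V$, $a_i\in k[G]$, following the construction of the \emph{matrix coefficient submodule}: let $W\subseteq V$ be spanned by all $(\operatorname{id}_V\otimes\phi)\Delta_V(v)=\sum_i\phi(a_i)v_i$ as $\phi$ ranges over $\Hom_k(k[G],k)$. Two facts are immediate. First, $W\subseteq\langle v_1,\dots,v_n\rangle$, so $W$ is a submodule of a finitely generated module and is therefore itself finitely generated \emph{because $k$ is Noetherian} — this is precisely where that hypothesis is used. Second, $v=(\operatorname{id}_V\otimes\epsilon)\Delta_V(v)\in W$ by the counit axiom. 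It then remains to verify that $W$ is $G$-stable, i.e.\ $\Delta_V(W)\subseteq W\otimes k[G]$. Writing $w=(\operatorname{id}_V\otimes\phi)\Delta_V(v)$, coassociativity gives $\Delta_V(w)=(\operatorname{id}_V\otimes\rho_\phi)\Delta_V(v)$ with $\rho_\phi=(\operatorname{id}\otimes\phi)\Delta_G$, and a second application shows $(\operatorname{id}_V\otimes\chi)\Delta_V(w)=(\operatorname{id}_V\otimes(\chi\ast\phi))\Delta_V(v)\in W$ for every $\chi\in\Hom_k(k[G],k)$, where $\chi\ast\phi=(\chi\otimes\phi)\circ\Delta_G$. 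Thus $\Delta_V(w)$ is annihilated by $\operatorname{id}_V\otimes\chi$ for all functionals $\chi$.

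The crux, and the step I expect to be the genuine obstacle, is upgrading this last fact to the honest membership $\Delta_V(w)\in W\otimes k[G]$. Here flatness of $G$ enters: since $k[G]$ is $k$-flat, the sequence $0\to W\otimes k[G]\to V\otimes k[G]\to (V/W)\otimes k[G]\to 0$ is exact, so it suffices to show that the image $y$ of $\Delta_V(w)$ in $(V/W)\otimes k[G]$ vanishes, knowing only that $(\operatorname{id}\otimes\chi)(y)=0$ for all $\chi$. This is exactly the content of the fundamental theorem of comodules, and it is \emph{not} formal from flatness alone: one must separate the finitely many $k[G]$-coefficients occurring in $y$ by functionals. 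When $k[G]$ is free this is the classical dual-basis argument — choose the expression $\Delta_V(v)=\sum_i v_i\otimes a_i$ with the $a_i$ part of a basis, take $\phi_j$ with $\phi_j(a_i)=\delta_{ij}$, and apply $\operatorname{id}_V\otimes\operatorname{id}_{k[G]}\otimes\phi_j$ to the coassociativity identity $\sum_i\Delta_V(v_i)\otimes a_i=\sum_i v_i\otimes\Delta_G(a_i)$ to read off $\Delta_V(v_j)\in\langle v_1,\dots,v_n\rangle\otimes k[G]$ directly. Extending this separation to a general flat $k[G]$ over a Noetherian base — using that only finitely many coefficients appear, together with the augmentation splitting $k[G]=k\cdot 1\oplus\ker\epsilon$ that keeps the counit available — is the real work, and I would either carry it out here or invoke the fundamental theorem of comodules over a Noetherian base (cf.\ Jantzen, Representations of Algebraic Groups, I.2.13). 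Assembling these pieces produces the finitely generated $G$-submodule $W$ containing $v$, which completes the pointwise step and hence the lemma.
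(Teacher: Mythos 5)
Your reduction to the pointwise claim, the directed-union formalism, and the computation that $(\operatorname{id}_V\otimes\chi)\Delta_V(w)=(\operatorname{id}_V\otimes(\chi\ast\phi))\Delta_V(v)\in W$ for all $\chi$ are all correct, and you have located the crux accurately. But that crux is a genuine gap, not a deferrable technicality, and the construction you chose cannot be completed as stated. The implication ``$(\operatorname{id}\otimes\chi)(y)=0$ for all $\chi\in\Hom_k(k[G],k)$ implies $y=0$ in $(V/W)\otimes k[G]$'' is a separation property of the $k$-module $k[G]$ that projectivity supplies via the dual basis lemma but that flatness does not: a flat module over a Noetherian ring need not be torsionless (already $\Hom_{\mathbb Z}(\mathbb Z[1/2],\mathbb Z)=0$ while $\mathbb Z[1/2]$ is a finitely generated flat $\mathbb Z$-algebra), and the splitting $k[G]=k\cdot 1\oplus\ker\epsilon$ gives you no control over the summand $\ker\epsilon$. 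Worse, in the flat non-projective case the span of matrix coefficients $W$ may simply be the wrong submodule, so there is nothing to ``upgrade.'' Your fallback of invoking the fundamental theorem of comodules over a Noetherian base is circular: that statement \emph{is} the lemma being proved.

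For comparison, the paper disposes of the lemma with a one-line citation to Serre [Proposition 2, Corollaire], and Serre's argument avoids functionals entirely; it is the standard repair and is worth carrying out if you want a self-contained proof. Write $\Delta_V(v)=\sum_{i=1}^n v_i\otimes a_i$ and let $V_0=\langle v,v_1,\dots,v_n\rangle$, so that $\Delta_V(v)\in V_0\otimes k[G]$ (flatness identifies $V_0\otimes k[G]$ with a submodule of $V\otimes k[G]$). Set
\begin{equation*}
W:=\ker\bigl(V_0\longrightarrow (V/V_0)\otimes k[G]\bigr),\qquad x\longmapsto \Delta_V(x)\bmod V_0\otimes k[G].
\end{equation*}
Then $v\in W$ by construction, and $W$ is finitely generated because $W\subseteq V_0$ and $k$ is Noetherian. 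Flatness of $k[G]$ identifies $W\otimes k[G]$ with the kernel of the induced map $V_0\otimes k[G]\to (V/V_0)\otimes k[G]\otimes k[G]$, and for $x\in W$ coassociativity gives $(\Delta_V\otimes\operatorname{id})\Delta_V(x)=(\operatorname{id}\otimes\Delta_G)\Delta_V(x)\in V_0\otimes k[G]\otimes k[G]$, which shows $\Delta_V(x)$ lies in that kernel, i.e.\ $\Delta_V(W)\subseteq W\otimes k[G]$. This uses exactly the two hypotheses of the lemma --- flatness to commute kernels and inclusions with $\otimes\,k[G]$, Noetherianness for finite generation --- and none of the separation property your route requires.
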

    \begin{proof}
        It follows from \cite[Proposition 2, Corollaire]{serre68}. 
    \end{proof}

    \begin{rem}\label{lemma:projective_seshadri}
        In particular, when~$G$ is smooth and has connected geometric fibres,~$k[G]$ is projective (see \cite[Lemma 1]{seshadri}), and therefore any~$G$-module~$V$ is locally finite.
    \end{rem}

    One can also define group actions on a scheme.  A \emph{group action} of~$G$ on a~$k$-scheme~$X$ is a natural transformation~$m\colon G \times X \rightarrow X$ such that, for all~$k$-algebras~$A$,
    \begin{equation*}
         m(A): G(A) \times X(A) \rightarrow X(A)
    \end{equation*} defines a group action. In particular, if~$X$ is affine, the induced coaction 
    \begin{equation*}
        \alpha^{\ast}: k[X]  \longrightarrow k[G] \otimes k[X]
    \end{equation*} is a comodule map for~$k[X]$. A scheme~$X$ with a~$G$-action is called a \emph{$G$-scheme}.

     Let~$G$ be a flat affine algebraic group scheme over a Noetherian ring~$k$. We say that $(G, \operatorname{Spec}(k))$ has the \emph{$G$-equivariant resolution property} if for every finitely generated~$G$-module~$M$, there is a ~$G$-module~$V$ that is finitely generated and projective over $k$ and a~$G$-equivariant epimorphism~$V \twoheadrightarrow M$. For a general definition, see \cite[Definition 2.1]{THOMASON1987}.

    \begin{lemma}\label{lemma:thomason_equivariant}
        If~$k$ is a regular ring of Krull dimension at most two, and~$G$ is a smooth affine group scheme over~$k$ with connected geometric fibres, then~$(G, \operatorname{Spec}(k))$ has the~$G$-equivariant resolution property.
    \end{lemma}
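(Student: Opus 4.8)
The plan is to reduce the statement to a commutative-algebra fact about reflexive modules, using Thomason's equivariant construction as the bridge. By definition I must show that every finitely generated $G$-module $M$ admits a $G$-equivariant epimorphism $V \twoheadrightarrow M$ with $V$ finitely generated and projective over $k$. Two standing inputs are available: since a regular ring is Noetherian and a smooth group scheme is flat, Lemma \ref{locally_finiteness} shows that every $G$-module is a filtered union of its finitely generated $G$-submodules, and Remark \ref{lemma:projective_seshadri} shows that $k[G]$ is projective over $k$. A third input is the ordinary (non-equivariant) resolution property of the affine scheme $\operatorname{Spec}(k)$: every finitely generated $k$-module is a quotient of a finite free module.

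First I would feed these into the construction of \cite{THOMASON1987}. Starting from a non-equivariant surjection $k^{n} \twoheadrightarrow M$ and the comodule map $\Delta_M \colon M \to M \otimes k[G]$, and using the projectivity of $k[G]$ together with the local finiteness of $M$ to remain within finitely generated objects, the construction produces a $G$-equivariant epimorphism $N \twoheadrightarrow M$ in which $N$ is finitely generated over $k$ and, crucially, \emph{reflexive} as a $k$-module. Reflexivity, rather than mere coherence, is the essential output: $N$ arises as a dual-type object, whose $G$-equivariance is of the sort governed by Remark \ref{alg_operation} and Lemma \ref{dual_proj}, and over the normal ring $k$ such a module satisfies Serre's condition $S_{2}$.

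It remains to promote $N$ from reflexive to projective, and this is precisely where regularity and Krull dimension at most two enter. Localizing at a prime, let $R$ be a regular local ring of dimension $d \le 2$ and let $N_{\mathfrak p}$ be finitely generated and reflexive over $R$. Being torsion-free it has full support, so $S_{2}$ gives $\operatorname{depth} N_{\mathfrak p} \ge \min(2,d) = d$, and the Auslander--Buchsbaum formula yields $\operatorname{pd} N_{\mathfrak p} = d - \operatorname{depth} N_{\mathfrak p} = 0$; hence $N_{\mathfrak p}$ is free. Thus $N$ is projective over $k$, and $N \twoheadrightarrow M$ is the required equivariant resolution.

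The main obstacle is the middle step: making a non-equivariant surjection $G$-equivariant while keeping its source finitely generated and reflexive over $k$. This is the technical core of \cite{THOMASON1987}, and the two delicate points are that the construction must stay inside finitely generated modules --- which is exactly why $k$ is taken Noetherian and why local finiteness (Lemma \ref{locally_finiteness}) is invoked --- and that the source it produces is reflexive over the normal ring $k$. Once reflexivity is secured the dimension hypothesis finishes the argument, and one sees transparently why the threshold is two: for $d \le 1$ finitely generated reflexive modules over regular rings are automatically projective, whereas for $d \ge 3$ reflexivity no longer forces projectivity, so the reduction would break down.
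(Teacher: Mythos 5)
The paper's own ``proof'' of this lemma is nothing more than the citation to \cite[Lemma 2.5]{THOMASON1987}, and your proposal follows essentially the same route: you defer the genuinely hard part --- the equivariant construction of a finitely generated \emph{reflexive} source surjecting onto $M$, including why the reflexive hull of the torsion-free coherent module the construction first produces inside $E\otimes_k k[G]$ still carries a $G$-action --- to Thomason, exactly as the paper defers everything to him, while the commutative-algebra endgame you supply (a finitely generated reflexive module over a regular local ring of dimension $d\le 2$ has depth $d$ by $S_2$, hence is free by Auslander--Buchsbaum) is correct and standard. So this is the paper's approach with the easy end fleshed out and the roles of the hypotheses correctly identified.
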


    \begin{proof}
        See \cite[Lemma 2.5]{THOMASON1987}.
    \end{proof}

    \begin{rem}
        Note that in various situations the local finiteness and the~$G$-equivariant resolution property allow us to reduce to working with~$G$-modules that are finitely generated and projective over $k$, for which more algebraic operations naturally inherit the~$G$-module structure. See Remark \ref{alg_operation}.
    \end{rem}

\subsection{Restriction and extension of scalars} 
Let~$f: R \rightarrow S$ be a homomorphism of commutative rings. If~$M$ is an~$S$-module, then~$M$ is also an~$R$-module, denoted~$M_R$, where~$r\cdot m:= f(r)\cdot s$. This is called the \emph{restriction of scalars}. If~$N$ is an~$R$-module. then we obtain an~$S$-module by applying the tensor functor~$(\cdot)\otimes_R S$. This is called the \emph{extension of scalars}. 

Let~$G$ be an affine algebraic group scheme over~$R$, and let~$G_S\coloneqq G \times_R S$. Let~$M$ be a~$G_S$-module. Then~$M_R$ is naturally a~$G$-module. Let~$V$ be a~$G$-module. Then~$V \otimes_R S$ is naturally a~$G_S$-module. See \cite[10.1]{Jantzen}.

\begin{prop}\label{base_change_representation}

    Let notation be as above. If~$f$ is a surjective homomorphism of commutative rings or~$S$ is the localization of~$R$ at a prime ideal, then~$M_R \otimes_R S \cong M$ as~$G_S$-modules.
    
\end{prop}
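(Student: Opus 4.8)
The plan is to write down one explicit $S$-linear map and then check two things about it: that it is bijective, and that it intertwines the comodule structures. The natural candidate is the ``action'' map
\[
\mu\colon M_R\otimes_R S\longrightarrow M,\qquad m\otimes s\longmapsto s\cdot m,
\]
where $s\cdot m$ uses the $S$-module structure on $M$. This is well defined and $R$-balanced precisely because the $R$-module structure on $M_R$ is $r\cdot m=f(r)m$, and it is manifestly $S$-linear and natural in $M$.

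First I would establish that $\mu$ is an isomorphism of $S$-modules. Both hypotheses on $f$ have the common consequence that $f$ is an epimorphism of rings, equivalently that the multiplication map $S\otimes_R S\to S$ is an isomorphism. Granting this, for any $S$-module $M$ one has
\[
M_R\otimes_R S\;\cong\;M\otimes_S(S\otimes_R S)\;\cong\;M\otimes_S S\;\cong\;M,
\]
and tracing through the identifications shows the composite is exactly $\mu$. Concretely, when $f$ is surjective one may instead argue $M_R\otimes_R S=M\otimes_R R/\ker f=M/(\ker f)M=M$, since $\ker f$ annihilates the $S$-module $M$; when $S=R_{\mathfrak p}$ one has $M_R\otimes_R S=M_{\mathfrak p}=M$, as every element of $R\setminus\mathfrak p$ already acts invertibly on the $R_{\mathfrak p}$-module $M$.

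It remains to verify that $\mu$ is $G_S$-equivariant, which is the real content. Writing $H=R[G]$, so that $S[G_S]=H\otimes_R S$, I would unwind the two base-change operations at the level of comodule maps: the coaction of $M_R$ is $\Delta_M$ transported along the canonical isomorphism $M\otimes_S(H\otimes_R S)\cong M\otimes_R H$, and the coaction $\rho'$ of $M_R\otimes_R S$ is $\Delta_{M_R}\otimes\operatorname{id}_S$ transported along $(M_R\otimes_R H)\otimes_R S\cong(M_R\otimes_R S)\otimes_S(H\otimes_R S)$. Then the equivariance square reduces to a short Sweedler-type computation: writing $\Delta_M(m)=\sum_i m_i\otimes(h_i\otimes 1)$, both $(\mu\otimes\operatorname{id})\circ\rho'$ and $\Delta_M\circ\mu$ send $m\otimes s$ to $\sum_i(s\cdot m_i)\otimes(h_i\otimes 1)$, the only nontrivial input being that $\Delta_M$ is $S$-linear, so $\Delta_M(s\cdot m)=s\cdot\Delta_M(m)$. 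The main obstacle is thus not any single hard estimate but the careful bookkeeping of the tensor and base-change identifications needed to see that equivariance collapses to $S$-linearity of $\Delta_M$; it is worth noting that this equivariance holds for arbitrary $f$, and only the bijectivity of $\mu$ uses the hypothesis that $f$ is surjective or a localization.
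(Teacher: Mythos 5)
Your proposal is correct and follows essentially the same route as the paper: both arguments reduce the module isomorphism to the key fact that $S\otimes_R S\cong S$ (verified separately for surjections and for localizations), and both then check compatibility of the $G_S$-structures. The only cosmetic difference is that you verify equivariance at the level of comodule maps via a Sweedler-type computation, whereas the paper checks it on $A$-points for $S$-algebras $A$ using $A\otimes_R S\cong A$; your closing observation that equivariance of $\mu$ needs no hypothesis on $f$ while bijectivity does is a nice clarification not made explicit in the paper.
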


\begin{proof}
    First note that~$M_R \otimes_R S \cong (M \otimes_S S) \otimes_R S  \cong M \otimes_S (S \otimes_R S).$
    The first isomorphism makes sense since~$M \otimes_S S \cong M$ as both~$S$-modules and~$R$-modules. The second isomorphism follows from the associativity of tensor product of modules. See \cite[Proposition 2.57]{Rotman}. Then~$M_R \otimes_R S \cong M$ if~$S \otimes_R S = S$.

    If~$f: R \twoheadrightarrow S$ is surjective, then let~$I$ denote the kernel of~$f$. Then we have the following exact sequence of~$R$-algebras:~$0 \longrightarrow I \longrightarrow R \longrightarrow S \longrightarrow 0$.
    By tensoring with~$S$ over~$R$, we obtain
        ~$I\otimes_R S \overset{j}{\longrightarrow} R \otimes_R S \longrightarrow S \otimes_R S\longrightarrow 0$.
     However,~$I \otimes_R S \cong I \otimes_R R/I$ is trivial. Therefore,~$S \otimes_R S \cong S/\operatorname{im}(j) \cong S$. If~$S$ is a localization of~$R$ at a prime ideal~$\mathfrak{p}$, then by \cite[Proposition 3.7]{AM94},~$R_{\mathfrak{p}} \otimes_R R_{\mathfrak{p}} \cong (R \otimes_R R)_{\mathfrak{p}} \cong R_{\mathfrak{p}}$.

    It remains to check that in both of these cases, the isomorphism~$M_R \otimes_R S \cong M$ preserves the~$G_S$-module structure. Let~$A$ be an~$S$-algebra, so~$A$ is also an~$R$-algebra by restriction of scalars. The action of~$G_S(A)$ on~$M \otimes_S A$ is compatible with the action of~$G_S(A \otimes_R S)$ on~$(M_R \otimes_R S) \otimes_S A$, since~$A \otimes_R S \cong A \otimes_S (S \otimes_R S) \cong A$ in the above two cases.
\end{proof}

\subsection{A functorial definition of invariants}

    Let~$G$ be an affine group scheme and~$V$ a~$G$-module. The \emph{invariants} are defined as 
    \begin{equation*}
        V^G = \{v \in V \mid g(v \otimes 1) = v \otimes 1 \textnormal{ for all } g \in G(A), \textnormal{ for all } k \textnormal{-algebra } A\}.
    \end{equation*} Note that~$V^G$ forms a~$G$-module.
This definition also coincides with invariants defined from coactions (see for example \cite[2.10]{Jantzen}):
    \begin{equation*}
        V^G = \{v \in V\mid \Delta_V(v) = v \otimes 1\},
    \end{equation*} where~$\Delta_V$ denotes the comodule map for~$V$.

Note that here~$V^G$ is described as the kernel of the map~$\Delta_V - \operatorname{id}_V \otimes 1$ and therefore, taking invariants commutes with flat base change. That is, ~$M^G \otimes_k R \cong (M \otimes_k R)^{G_R}$ as~$G_R$-modules if ~$k \rightarrow R$ is a flat extension. See for example \cite[2.10]{Jantzen} or \cite[Lemma 2]{seshadri}.

\section{Linearly reductive groups}\label{sec_lrg}
In this section we introduce the notion of linearly reductive group schemes over a commutative ring~$k$. We will also show some properties of representations of linearly reductive group schemes in analogy to what is known over fields. Throughout this section, we assume that~$G$ is a flat affine group scheme over a commutative ring~$k$ unless specified otherwise. Under this assumption, the category of~$G$-modules is an abelian category (\cite[Proposition 2]{seshadri}). 

The flatness assumption on~$G$ guarantees that the fixed point functor~$(\cdot)^G$ is left exact so we may perform cohomology: similar to group cohomology of abstract groups, we may also obtain cohomology functors derived from the fixed point functor because the category of~$G$-modules is abelian and contains enough injective objects. The cohomology groups can be computed using Hochschild's complex: 
\begin{equation*}
    M \overset{\partial_0}{\longrightarrow} M \otimes_k k[G] \overset{\partial_1}{\longrightarrow} M\otimes_k k[G] \otimes_k k[G] \overset{\partial_2}{\longrightarrow} M\otimes_k k[G] \otimes_k k[G] \otimes_k k[G] \longrightarrow \cdots.
\end{equation*}\noindent Here~$\partial_0 \coloneqq \Delta_M - \identity_M$. See Chapter 4 of \cite{Jantzen} and also \cite[Section 2]{Margaux2009} for details. Let~$Z^i(G, M) \coloneqq \ker(\partial_i)$ denote the \emph{i-cocycles}, and~$B^i(G, M) = \im(\partial_{i-1})$ denote the \emph{i-coboundaries}. Define the \emph{i-th Hochschild cohomology group}~$H^i(G, M) \coloneqq Z^i(G, M)/B^i(G,M)$. Then we have the following exact sequence: 
\begin{equation}\label{hochschild_exact}
    0 \longrightarrow M^G = H^0(G, M) \longrightarrow M \overset{\partial_0}{\longrightarrow} Z^1(G, M) \longrightarrow H^1(G, M) \longrightarrow 0.
\end{equation}

We start with a cohomological description of linearly reductive groups.
\begin{defn}\label{def:LRG}
    A group~$G$ is linearly reductive if the first Hochschild cohomology group $H^1(G, V)$ vanishes for all~$G$-modules~$V$.
\end{defn}

We will see in the next section that the definition of a reductive group~$G$ depends on the nature of~$G$ over its \emph{geometric fibres}, i.e., the base changes~$G \times_k \overline{\kappa(x)}$, where~$\overline{\kappa(x)}$ denotes the algebraic closure of the residue field at a point~$x \in \spec(k)$. We will also explore how linear reductivity is related to linearly reductivity over geometric fibres.

\begin{prop}\label{LRG_base_change}
    Let~$G$ be a linearly reductive group over a commutative ring~$k$. Let~$A$ be a~$k$-algebra. 
    \begin{enumerate}[label = ($\alph*$)]
        \item The base change~$G_A = G \times_k A$ is a linearly reductive group over~$A$. In particular, if~$G$ is linearly reductive, then~$G$ is linearly reductive over all geometric fibres.
        \item Let~$A$ be faithfully flat over~$k$. If the base change~$G_A = G \times_k A$ is linearly reductive over~$A$, then~$G$ is linearly reductive over~$k.$
    \end{enumerate}
\end{prop}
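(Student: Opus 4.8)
The plan is to compute linear reductivity entirely through the Hochschild complex and to track how this complex behaves under restriction and extension of scalars. Recall that for a $G$-module $M$ the complex computing $H^\bullet(G,M)$ has terms $C^i(G,M) = M \otimes_k k[G]^{\otimes_k i}$, with differentials built from the comodule map $\Delta_M$ and the comultiplication $\Delta_G$. The whole argument rests on the single identity $A[G_A] \cong A \otimes_k k[G]$, expressing the coordinate ring of the base change $G_A = G \times_k A$.

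For part (a), let $V$ be any $G_A$-module and let $V_k$ denote its restriction of scalars, which is naturally a $G$-module. I would first observe that for each $i$ there is a canonical identification
$$ V \otimes_A A[G_A]^{\otimes_A i} \;\cong\; V \otimes_k k[G]^{\otimes_k i}, $$
obtained from the identity above together with the associativity isomorphism $M \otimes_A (A \otimes_k N) \cong M \otimes_k N$ for an $A$-module $M$ and a $k$-module $N$. The next step is to check that, under these identifications, the Hochschild differentials for the $G_A$-module $V$ coincide with those for the $G$-module $V_k$; this holds because $\Delta_{G_A}$ and $\Delta_V$ are obtained from $\Delta_G$ and the comodule map of $V_k$ by the very same identifications. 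Consequently the two Hochschild complexes are isomorphic, so $H^1(G_A, V) \cong H^1(G, V_k)$, and the latter vanishes because $G$ is linearly reductive. Taking $A = \overline{\kappa(x)}$ for $x \in \operatorname{Spec}(k)$ then gives the statement over geometric fibres.

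For part (b), I would instead use that $A$ is flat over $k$. Tensoring the Hochschild complex $C^\bullet(G, M)$ with $A$ over $k$ yields, by the same identity $A[G_A] \cong A \otimes_k k[G]$ and associativity, the Hochschild complex $C^\bullet(G_A, M \otimes_k A)$ of the $G_A$-module $M \otimes_k A$. Since $A$ is flat, tensoring commutes with taking cohomology, giving
$$ H^i(G, M) \otimes_k A \;\cong\; H^i(G_A, M \otimes_k A) $$
for all $i$; this is the higher-degree analogue of the fact, noted after \eqref{hochschild_exact}, that invariants commute with flat base change. Taking $i = 1$, the right-hand side vanishes because $G_A$ is linearly reductive over $A$, so $H^1(G, M) \otimes_k A = 0$. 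Faithful flatness of $A$ over $k$ then forces $H^1(G, M) = 0$ for every $G$-module $M$, which is exactly linear reductivity of $G$.

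The routine steps, namely the associativity identifications of the complex terms, are harmless. The one point demanding genuine care, and the main obstacle, is verifying that these identifications are compatible with the Hochschild differentials, equivalently that the comodule and comultiplication structures base-change as claimed; once this naturality is in hand, both parts follow formally, part (a) from restriction of scalars and part (b) from faithfully flat descent of the vanishing of $H^1$.
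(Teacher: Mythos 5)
Your argument is correct, and it takes a somewhat different route from the paper's. The paper does not argue with the Hochschild complex beyond degree zero: for part (a) it identifies $V^{G_A}$ with $(V_k)^{G_k}$ via $V\otimes_A A[G_A]\cong V_k\otimes_k k[G]$ and then verifies linear reductivity through the exactness of the invariants functor (i.e., surjectivity of $M^{G}\to N^{G}$ for every surjection of modules), and for part (b) it combines exactness of $(\cdot)^{G_A}$ with the fact that $H^0$ commutes with flat base change and then descends surjectivity along the faithfully flat map $k\to A$. This implicitly invokes the standard equivalence between Definition~\ref{def:LRG} (vanishing of $H^1$) and exactness of invariants. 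Your version stays entirely at the level of the Hochschild complex: the same identification $A[G_A]\cong A\otimes_k k[G]$ is propagated to every term $M\otimes_k k[G]^{\otimes i}$, giving an isomorphism of complexes under restriction of scalars for (a) and the flat base-change isomorphism $H^i(G,M)\otimes_k A\cong H^i(G_A,M\otimes_k A)$ for (b), after which faithful flatness kills $H^1(G,M)$. What your approach buys is that it works directly with the stated definition and needs no detour through exactness of the fixed-point functor; it also yields the base-change statement in all cohomological degrees at once (this is essentially Jantzen's I.4.13). The one point you rightly flag --- compatibility of the differentials with the identifications --- is exactly the content of how restriction and extension of scalars of comodules are defined, so it is not a gap.
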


\begin{proof}
    ($a$). Let~$V$ be a~$G_A$-module. By restriction of scalars, we may also view~$V$ as a~$G_k$-module, denoted by~$V_k$. Consider the comodule map~$\Delta_V \colon V \rightarrow V \otimes_A A[G].$ By definition,~$V^{G_A} = \{v \in V\mid \Delta_V(v) = v \otimes 1\}.$ Moreover, note that~$V \otimes_A A[G] \cong V \otimes_A A \otimes_k k[G] \cong V_k \otimes_k k[G].$ Here the second isomorphism makes sense since~$V \otimes_A A \cong V$ as both~$k$-modules and~$A$-modules. Then similarly,~$V_k^{G_k} = \{v \in V\mid \Delta_V(v) = v \otimes 1\} = V^{G_A}.$

    Now consider a surjective homomorphism of~$G_A$-modules~$\pi: M \twoheadrightarrow N$. Note that we may view~$M$ and~$N$ as~$G_k$-modules, denoted by~$M_k$ and~$N_k$. By assumption,~$\pi \colon M_k \twoheadrightarrow N_k$ induces~$\pi\mid_{M_k^{G_k}}\colon M_k^{G_k} \twoheadrightarrow N_k^{G_k}.$ Thus,~$\pi$ induces~$\pi\mid_{M^G} \colon M^G \twoheadrightarrow N^G.$

    ($b$). Let~$V \twoheadrightarrow W$ be a surjective homomorphism of~$G$-modules. By assumption, the surjective homomorphism of~$G_A$-modules~$V \otimes_k A \twoheadrightarrow W \otimes_k A$ induces~$(V \otimes_k A)^{G_A} \twoheadrightarrow (W \otimes_k A)^{G_A}$. Moreover, since~$A$ is faithfully flat over~$k$,~$(\cdot)^G$ is also left exact. Since~$(\cdot)^G$ commutes with flat base changes (\cite[Lemma 2]{seshadri}), we obtain a surjective homomorphism of~$G$-modules~$V^G \twoheadrightarrow W^G$. Therefore, ~$G$ is linearly reductive over~$k$.
\end{proof}

This shows that there are examples of groups that are linearly reductive over fields of characteristic zero, but fail to be linearly reductive over commutative rings. For example,~$\operatorname{GL}_n$ is linearly reductive over fields of characteristic zero but not linearly reductive over~$\mathbb{Z}$. 

\subsection{Reynolds operator}

Over fields, one interesting property of linearly reductive groups is that a Reynolds operator exists. We also explore this property over rings.

\begin{defn}\label{def_reynolds}
    Let~$V$ be a~$G$-module. A \emph{Reynolds operator}~$\mathcal{R}_{G}: V \rightarrow V^G$ is a~$G$-equivariant homomorphism such that~$\mathcal{R}_{G}(V) = V^G$ and~$\mathcal{R}_{G}\mid_{V^G} = \operatorname{id}.$ That is, if a Reynolds operator exists, there exists a~$G$-stable complement of~$V^G$ in~$V$. A \emph{functorial Reynolds Operator} is a choice of Reynolds operator for all~$G$-modules such that the following diagram is commutative.
    \begin{equation*}\label{functorial_Reynolds}
        \begin{tikzcd}
            & V \arrow{r}{\mathcal{R}_G}\arrow{d}{\varphi} & V^G \arrow{d}{\varphi\mid_{V^G}} \\
            & W \arrow{r}{\mathcal{R}_G} & W^G
        \end{tikzcd}
    \end{equation*}
\end{defn}

First, we prove the functorial property of the Reynolds operator.

\begin{prop}\label{reynolds_to_LR}
    Consider the~$G$-module~$k[G]$, on which~$G$ acts by left translation. If the trivial~$G$-module~$k$ has a~$G$-stable complement~$P$ in~$k[G]$, then~$H^1(G, V) = 0$ for all~$G$-modules. That is,~$G$ is linearly reductive.
\end{prop}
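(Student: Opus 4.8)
The plan is to reduce the vanishing of $H^1(G,V)$ for an \emph{arbitrary} $G$-module $V$ to the hypothesis, which only concerns the single module $k[G]$. The key structural fact I would exploit is that $k[G]$, with the left translation action, is an \emph{injective} (indeed cofree) object in the category of $G$-modules, and more generally that $V \otimes_k k[G]$ is an injective $G$-module for every $V$ when $G$ acts on the tensor factor $k[G]$ by left translation. Hochschild cohomology is precisely the derived functor of $(\cdot)^G$, so $H^i(G, V\otimes_k k[G]) = 0$ for all $i \geq 1$. This is the engine: the Hochschild complex resolves $V$ by the injective modules $V \otimes_k k[G]^{\otimes n}$, and these have no higher cohomology.

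The main steps I would carry out are as follows. First I would interpret the hypothesis cohomologically: a $G$-stable complement $P$ of the trivial module $k$ inside $k[G]$ says exactly that the inclusion $k \hookrightarrow k[G]$ splits $G$-equivariantly, equivalently that the projection onto invariants $k[G] \to k[G]^G$ admits the structure needed so that the coboundary $\partial_0 \colon k[G] \to Z^1(G, k[G])$ is surjective, i.e. $H^1(G, k[G]) = 0$. Second, using the exact sequence \eqref{hochschild_exact} together with the injectivity of $V \otimes_k k[G]$, I would set up the standard degree-shifting argument. Concretely, for any $G$-module $V$ the comodule map gives a $G$-equivariant embedding $\Delta_V \colon V \hookrightarrow V \otimes_k k[G]$ whose cokernel $Q$ is again a $G$-module; the long exact sequence in Hochschild cohomology associated to
\begin{equation*}
0 \longrightarrow V \longrightarrow V \otimes_k k[G] \longrightarrow Q \longrightarrow 0
\end{equation*}
then identifies $H^1(G, V)$ with a subquotient governed by $H^0$ and $H^1$ of the injective middle term, which vanishes in positive degree.

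The step I expect to be the genuine obstacle is passing from the splitting of $k \hookrightarrow k[G]$ to the vanishing of $H^1(G, V)$ for \emph{all} $V$, rather than just for $V = k[G]$ or $V = k$. The naive degree-shift shows $H^1(G,V)$ injects into $H^1(G, Q)$ and one must feed in the hypothesis at the right place. I would do this by tensoring the split sequence $k[G] \cong k \oplus P$ with an arbitrary $V$: since the tensor operation preserves $G$-module structure (Remark \ref{alg_operation}) and the left-translation action makes $V \otimes_k k[G]$ injective, the splitting should produce a $G$-equivariant retraction $V \otimes_k k[G] \twoheadrightarrow V$ of $\Delta_V$, i.e. a Reynolds-type projection, forcing the connecting map in the long exact sequence to vanish and hence $H^1(G,V)=0$. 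The technical care lies in checking that the complement $P$ of $k$ in $k[G]$ interacts correctly with the diagonal-versus-left-translation actions when tensored with $V$, so that the retraction is honestly $G$-equivariant; this is where I would spend most of the effort, and where I would lean on the explicit description of the comodule maps in the commuting diagrams of Section \ref{sec_affine_group_scheme}.
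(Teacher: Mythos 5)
Your proposal is correct and is essentially the paper's argument: both rest on the vanishing $H^{1}(G, V\otimes_k k[G])=0$ coming from the tensor identity, combined with tensoring the splitting $k[G]=k\oplus P$ by $V$ to exhibit $V$ as a $G$-module direct summand of $V\otimes_k k[G]$, whence $H^1(G,V)$ is a direct summand of $H^1(G,V\otimes_k k[G])=0$. The paper reads this off in one line from $H^1(G,V\otimes_k k[G])=H^1(G,V)\oplus H^1(G,V\otimes_k P)$, so your detour through the long exact sequence of $0\to V\to V\otimes_k k[G]\to Q\to 0$ --- and the attendant worry about producing a retraction of $\Delta_V$ specifically, rather than of the simpler inclusion $v\mapsto v\otimes 1$ --- is unnecessary scaffolding.
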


\begin{proof}
    For any~$G$-module~$V$,
    \begin{equation*}
        H^1(G, V\otimes k[G]) = H^1(G, V \otimes_k (k \oplus P)) = H^1(G, V) \oplus H^1(G, V\otimes_k P) = 0
    \end{equation*} by \cite[I.4.7, 4.15]{Jantzen}. Therefore,~$H^1(G, V) = 0$ for all~$G$-modules~$V$.
\end{proof}

If the trivial~$G$-module~$k$ has a~$G$-stable complement~$P$ in~$k[G]$, we can define an~$G$-equivariant homomorphism~$I \colon k[G] \to k$ with respect to left translation by letting~$I(a) = a$ for~$a \in k$ and~$I(p) = 0$ for~$p \in P$. Since the left and right translation commmute, we can show by the following lemma that~$I$ is~$G$-equivariant.

\begin{lemma}\label{unique_complement}
    If the trivial~$G$-module~$k$ has a~$G$-stable complement~$P$ in~$k[G]$, then~$P$ must be unique.
\end{lemma}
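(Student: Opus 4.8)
The plan is to reformulate the statement in terms of $G$-equivariant retractions and then to recognize the relevant map as an integral on the Hopf algebra $k[G]$, whose uniqueness I extract from a short convolution computation. Giving a $G$-stable complement $P$ to the trivial submodule $k \subseteq k[G]$ is the same as giving a $G$-equivariant retraction $\pi\colon k[G]\to k$ (the projection with kernel $P$, so that $\pi|_{k}=\operatorname{id}$), and $P=\ker\pi$ recovers the complement. Thus two complements $P,P'$ coincide as soon as their projections $\pi,\pi'$ do, so it suffices to show that a $G$-equivariant retraction $k[G]\to k$ is unique. Equivalently, writing $\delta=\pi-\pi'$, I must show that any $G$-equivariant map $\delta\colon k[G]\to k$ to the trivial module with $\delta(1)=0$ vanishes.

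Next I would unwind $G$-equivariance. By the defining diagram of a comodule map, a $k$-linear $\pi\colon k[G]\to k$ is $G$-equivariant for the left-translation structure precisely when $(\pi\otimes\operatorname{id})\circ\Delta_{k[G]}=\pi(\,\cdot\,)\cdot 1$, that is, $\pi$ is a normalized one-sided integral on the commutative Hopf algebra $k[G]$; here I must track the antipode appearing in the comodule map of the left-regular representation so as to fix the correct handedness. The single algebraic input I will use is the resulting identity in the convolution algebra $k[G]^{\ast}=\operatorname{Hom}_k(k[G],k)$, with $(\alpha*\beta)(f)=\sum\alpha(f_{(1)})\beta(f_{(2)})$: a one-sided integral $\pi$ satisfies $\pi*\beta=\beta(1)\,\pi$ for every $\beta$, while an integral $\mu$ of the opposite side satisfies $\beta*\mu=\beta(1)\,\mu$ for every $\beta$.

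The heart of the argument is then the following. Since $\pi,\pi'$ are retractions of the same handedness, $\delta=\pi-\pi'$ is again a one-sided integral of that side, with $\delta(1)=0$. Applying the antipode $S$ (a bijective coalgebra anti-homomorphism with $S^2=\operatorname{id}$, as $k[G]$ is commutative) converts $\delta$ into an integral $\widetilde\delta:=\delta\circ S$ of the opposite side, still with $\widetilde\delta(1)=\delta(S1)=\delta(1)=0$. Computing $\pi*\widetilde\delta$ in two ways, the first identity (with $\beta=\widetilde\delta$) gives $\pi*\widetilde\delta=\widetilde\delta(1)\,\pi=0$, whereas the second identity (with $\mu=\widetilde\delta$ and $\beta=\pi$) gives $\pi*\widetilde\delta=\pi(1)\,\widetilde\delta=\widetilde\delta$, because $\pi(1)=1$. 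Hence $\widetilde\delta=0$, and as $S$ is bijective, $\delta=0$. Therefore $\pi=\pi'$ and $P=\ker\pi=\ker\pi'=P'$.

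The step I expect to be the main obstacle is precisely the need to bring in the antipode: convolution on $k[G]^{\ast}$ is noncommutative when $G$ is nonabelian, so one cannot simply equate $\pi*\delta$ with $\delta*\pi$ and cancel. Replacing $\delta$ by the opposite-handed integral $\widetilde\delta=\delta\circ S$ is exactly the device that lets the two one-sided absorption identities meet on the single product $\pi*\widetilde\delta$. Apart from this, the only delicate points are bookkeeping: pinning down the exact comodule map, and hence the exact integral condition, for the left-regular representation, and checking that the Hopf-algebraic facts invoked above — associativity of convolution and that the antipode is a bijection interchanging the two sides of integrals — remain valid over an arbitrary commutative base ring $k$, which they do because $k[G]$ is a commutative Hopf algebra.
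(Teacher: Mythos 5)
Your argument is correct, but it takes a genuinely different route from the paper's. You identify a $G$-stable complement of $k$ in $k[G]$ with a $G$-equivariant retraction $\pi\colon k[G]\to k$, recognize such a retraction as a normalized one-sided integral on the commutative Hopf algebra $k[G]$, and then run the classical uniqueness argument for normalized integrals: the difference $\delta$ of two retractions is a one-sided integral vanishing at $1$, composing with the antipode yields an integral $\widetilde\delta=\delta\circ S$ of the opposite side, and the single convolution product $\pi\ast\widetilde\delta$ is absorbed in both directions, giving $\widetilde\delta=\pi(1)\widetilde\delta=\pi\ast\widetilde\delta=\widetilde\delta(1)\pi=0$ and hence $\delta=0$ by bijectivity of $S$. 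The ingredients you invoke ($S^2=\operatorname{id}$ for a commutative Hopf algebra, $S$ exchanging left and right integrals, the two absorption identities) all hold over an arbitrary commutative base ring, and the handedness bookkeeping you flag is routine, so the proof closes. The paper argues cohomologically instead: the existence of one complement $P$ already forces $H^1(G,-)=0$ by Proposition \ref{reynolds_to_LR}; given a second complement $Q$, the projection $I$ along $P$ restricts to a nonzero map $\phi\colon Q\to k$, and a pullback of $\phi$ against multiplication by an element of its image, together with $H^1(G,\ker\pi)=0$, lifts a nonzero scalar to a nonzero invariant of $Q$, contradicting $Q^G=0$. Your route is self-contained and purely Hopf-algebraic, at the price of importing standard facts about antipodes and integrals; the paper's route stays inside the cohomological framework it has just set up and reuses the linear reductivity that the surrounding results are about.
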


\begin{proof}
     Suppose by contradiction,~$Q$ is another complement. Then we get a nonzero map~$\phi\coloneqq I\mid_Q \colon Q \rightarrow k$. Let~$v$ be a nonzero element in the image of~$\phi$ and define~$\psi \colon k \rightarrow k$ by~$\psi(a) = av.$ Define~$M$ by the pull-back square
        \begin{tikzcd}
            &M \arrow[r, "\pi", twoheadrightarrow] \arrow{d}{}&k\arrow{d}{\psi}\\
            &Q \arrow{r}{\phi} &k
        \end{tikzcd}. Since~$H^1(G, \ker(\pi)) = 0$, any nonzero~$a \in k$ lifts to a nonzero invariant in~$M$ and therefore, we may find a nonzero invariant in~$Q$. However,~$Q^G = 0$. This leads to a contradiction.
\end{proof}

\begin{prop}\label{lem_functorial}
    Let~$G$ be a flat affine group scheme over a commutative ring~$k$. If a Reynolds operator exists for~$k[G]$, then there exists a functorial Reynolds operator for~$G$.
\end{prop}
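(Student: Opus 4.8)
The plan is to build the Reynolds operator on an arbitrary $G$-module out of the integral on $k[G]$ by the usual averaging construction. Let $P$ be the $G$-stable complement of $k$ in $k[G]$, which is unique by Lemma~\ref{unique_complement}, and let $I \colon k[G] \to k$ be the associated projection, so that $I\mid_k = \operatorname{id}$ and $I\mid_P = 0$. Because right translation commutes with left translation and permutes the left-stable complements of the trivial submodule $k$, uniqueness of $P$ forces $P$ to be stable under right translation as well; hence $I$ is equivariant for both the left and the right regular actions. For an arbitrary $G$-module $V$ with comodule map $\Delta_V \colon V \to V \otimes k[G]$, I would define $\mathcal{R}_V \coloneqq (\operatorname{id}_V \otimes I) \circ \Delta_V \colon V \to V$, where $V \otimes k$ is identified with $V$, and then verify that this family is a functorial Reynolds operator.

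Two of the required properties are immediate. First, if $v \in V^G$ then $\Delta_V(v) = v \otimes 1$, so $\mathcal{R}_V(v) = v\,I(1) = v$; thus $\mathcal{R}_V\mid_{V^G} = \operatorname{id}$, and once the inclusion $\mathcal{R}_V(V) \subseteq V^G$ is established it follows that $\mathcal{R}_V(V) = V^G$. Second, functoriality is built into the definition: if $\varphi \colon V \to W$ is a homomorphism of $G$-modules, then $(\varphi \otimes \operatorname{id}) \circ \Delta_V = \Delta_W \circ \varphi$, and postcomposing with $\operatorname{id} \otimes I$ gives $\varphi \circ \mathcal{R}_V = \mathcal{R}_W \circ \varphi$, which is exactly the commutativity of the functoriality diagram in Definition~\ref{def_reynolds}.

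The substance of the argument is to show that $\mathcal{R}_V$ lands in $V^G$ and is $G$-equivariant. I would verify both by a direct computation with the comodule axioms, written in Sweedler notation $\Delta_V(v) = \sum v_{(0)} \otimes v_{(1)}$ and $\Delta_G(f) = \sum f_{(1)} \otimes f_{(2)}$. Coassociativity of $\Delta_V$ rewrites $\Delta_V(\mathcal{R}_V(v))$ and $(\mathcal{R}_V \otimes \operatorname{id})\Delta_V(v)$ so that they reduce, respectively, to the two integral identities $(\operatorname{id}_{k[G]} \otimes I) \circ \Delta_G = I(\cdot)\,1$ and $(I \otimes \operatorname{id}_{k[G]}) \circ \Delta_G = I(\cdot)\,1$. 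Granting these, the first identity yields $\Delta_V(\mathcal{R}_V(v)) = \mathcal{R}_V(v) \otimes 1$, i.e. $\mathcal{R}_V(v) \in V^G$, and the second yields $(\mathcal{R}_V \otimes \operatorname{id})\Delta_V(v) = \mathcal{R}_V(v) \otimes 1$, i.e. $\mathcal{R}_V$ is equivariant for the map into the trivial module and hence, combined with the previous step, a genuine $G$-equivariant projection onto $V^G$. Note that no finiteness hypothesis on $V$ is needed, since this is a formal consequence of the comodule axioms.

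The crux is therefore to derive the two integral identities from the equivariance of $I$. The identity $(I \otimes \operatorname{id}) \Delta_G = I(\cdot)\,1$ is exactly the statement that $I$ is equivariant for right translation. The identity $(\operatorname{id} \otimes I) \Delta_G = I(\cdot)\,1$ is the content of left-translation equivariance: writing the left regular comodule map as $\Delta^L(f) = \sum f_{(2)} \otimes S(f_{(1)})$ with $S$ the antipode, equivariance of $I$ reads $\sum I(f_{(2)})\,S(f_{(1)}) = I(f)\,1$, and applying $S$ together with the identity $S^2 = \operatorname{id}$, valid because $k[G]$ is commutative, converts this into the desired form. This translation between one-sided equivariance and the integral identities — keeping the left/right conventions and the role of the antipode straight — is the main obstacle; the remaining verifications are formal.
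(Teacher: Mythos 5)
Your proposal is correct and follows essentially the same route as the paper: both define $\mathcal{R}_V = (\operatorname{id}_V \otimes I)\circ\Delta_V$ using the bi-equivariant integral $I\colon k[G]\to k$ obtained from the unique $G$-stable complement of $k$ in $k[G]$, and both verify the image, equivariance, and functoriality via coassociativity. Your reduction to the two integral identities $(\operatorname{id}\otimes I)\Delta_G = I(\cdot)\,1$ and $(I\otimes\operatorname{id})\Delta_G = I(\cdot)\,1$ in Sweedler notation is just an equivalent rephrasing of the paper's commutative-diagram chases (which use the section $s$ of $I$ in place of the explicit antipode manipulation), so no substantive difference remains.
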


\begin{proof} This statement is well-known if~$k$ is a field, e.g., see \cite{Gulbrandsen}, and the proof generalizes to a commutative ring~$k$.

    If a Reynolds operator exists for the~$G$-module~$k[G]$, then there exists a~$G$-equivariant homomorphism~$I \colon k[G] \rightarrow k[G]^G = k$ such that~$I(1) = 1$ with respect to both left and right translation of~$G$ by Lemma \ref{unique_complement} and the discussion above it. For a~$G$-module~$V$, define~$\mathcal{R}_G$ as the following composition of maps:
    \begin{equation*}
        \mathcal{R}_G\colon V \overset{\Delta_V}{\longrightarrow} V \otimes_k k[G] \overset{\operatorname{id}\otimes I}{\longrightarrow} V \otimes_k k \cong V.
    \end{equation*} 
    
    First note that for any~$v \in V^G$,~$\mathcal{R}_G(v) = ((\operatorname{id}\otimes I)\circ \Delta_V)(v) = (\operatorname{id}\otimes I)(v \otimes 1) = vI(1) = v$. To check that~$\mathcal{R}_G$ is a Reynolds operator, we then need to check that it is~$G$-equivariant. Let~$s$ denote a~$G$-equivariant section of~$I$. Consider the following diagram:
    \begin{center}
        \begin{tikzcd}
            &V \arrow{r}{\Delta_V} \arrow{d}{\Delta_V} & V\otimes_k k[G] \arrow{r}{\operatorname{id}\otimes I} \arrow{d}{\operatorname{id}\otimes \Delta_G} &V \otimes k \arrow{d}{\operatorname{id}\otimes s}
        \\
        & V \otimes_k k[G] \arrow{r}{\Delta_V \otimes \operatorname{id}} &V \otimes_k k[G] \otimes_k k[G] \arrow{r}{\operatorname{id}\otimes I \otimes \operatorname{id}} & V \otimes_k k[G]
    \end{tikzcd}
    \end{center}
    \noindent Note that the left square is commutative by the associativity of the group law and the right square is commutative because~$I$ is~$G$-equivariant with respect to the right action. Therefore, the outer square is commutative, whence,~$\mathcal{R}_G$ is~$G$-equivariant. 
    
    Then we check that the image of~$V$ under~$\mathcal{R}_G$ is~$V^G$. That is, we show that~$\Delta_V(\mathcal{R}_G(v)) = \mathcal{R}_G(v) \otimes 1.$ Consider the following diagram:
    \begin{center}
        \begin{tikzcd}
            &V \arrow{r}{\Delta_V} \arrow{d}{\Delta_V} & V\otimes_k k[G] \arrow{r}{\operatorname{id}\otimes I} \arrow{d}{\operatorname{id}\otimes \Delta_G} &V \otimes k\arrow{d}{\operatorname{id}\otimes s}
        \\
        & V \otimes_k k[G] \arrow{r}{\Delta_V \otimes \operatorname{id}} \arrow{dr}{\operatorname{id}\otimes I}&V \otimes_k k[G] \otimes_k k[G] \arrow{r}{\operatorname{id} \otimes \operatorname{id}\otimes I} & V \otimes_k k[G]\\
        & &V\otimes k \arrow{ur}{\Delta_V} &
    \end{tikzcd}
    \end{center}
    The left square is commutative as argued before and the right square is commutative since~$I$ is~$G$-equivariant with respect to the left action of~$G$. Moreover, the lower triangle is also commutative. Indeed,~$(\operatorname{id}  \otimes \operatorname{id}\otimes I) \circ (\Delta_V \otimes \operatorname{id}) = ((\operatorname{id} \otimes \operatorname{id})\circ \Delta_V)\otimes (I \circ \operatorname{id}) =(\Delta_V\circ \operatorname{id}) \otimes (\operatorname{id} \circ I) = \Delta_V\circ (\operatorname{id}\otimes I).$ Therefore, the outer pentagon is commutative. Hence,
    \begin{align*}
        \Delta_V (((\operatorname{id}\otimes I)\circ \Delta_V)(v)) &= \Delta_V(\mathcal{R}_G(v))\\
        &= ((\operatorname{id}\otimes s)\circ(\operatorname{id}\otimes I) \circ \Delta_V) (v)\\
        &= (\operatorname{id} \otimes s)(\mathcal{R}_G(v)) \\
        &= \mathcal{R}_G(v) \otimes 1.
    \end{align*}

    It remains to check that~$\mathcal{R}_G$ is functorial. Let~$\varphi \colon V \rightarrow W$ be a homomorphism of~$G$-modules. We see that the outer square of the following diagram commutes: the left square commutes since~$\varphi$ is~$G$-equivariant and the right square commutes for trivial reasons.
    \begin{center}
        \begin{tikzcd}
            &V \arrow{r}{\Delta_V} \arrow{d}{\varphi}&V \otimes_k k[G] \arrow{r}{\operatorname{id}\otimes I} \arrow{d}{\varphi \otimes \operatorname{id}} &V \otimes k \arrow{d}{\varphi \otimes \operatorname{id}}\\
            & W \arrow{r}{\Delta_W} & W \otimes_k k[G] \arrow{r}{\operatorname{id}\otimes I} &W \otimes k
        \end{tikzcd}
    \end{center} Therefore,~$\mathcal{R}_G$ is functorial.
\end{proof}

Now, in order to study whether a functorial Reynolds operator exists for a linearly reductive group, we first come back to some properties of Hochschild cohomology. 
\begin{lemma}\label{cohomology_lemma}
    Let~$k$ be a commutative ring and let~$M$ be a~$G_k$-module. Let~$M = \varinjlim_i M_i$ be a direct limit of~$G_k$-modules~$M_i$, then~$$\varinjlim_i H^j(G, M_i) \overset{\sim}{\longrightarrow} H^j(G, M) \textnormal{ for all }j \geq 0$$ and~$$\varinjlim_i Z^j(G, M_i) \overset{\sim}{\longrightarrow} Z^j(G, M) \textnormal{ for all }j \geq 1.$$
\end{lemma}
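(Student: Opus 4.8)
The plan is to show that the entire Hochschild complex computing the cohomology is built from the coefficient module by functors that commute with directed colimits, and then to invoke the exactness of filtered colimits in the category of $k$-modules. First I would record that the $n$-th term of the Hochschild complex is $C^n(G,M) = M \otimes_k k[G]^{\otimes n}$, and that each coboundary $\partial_n$ is natural in the $G$-module $M$: it is assembled from the comodule map $\Delta_M$, the comultiplication $\Delta_G$, and the counit $\epsilon$, all of which are compatible with $G$-module homomorphisms. Consequently $C^{\bullet}(G,-)$ is a functor from $G_k$-modules to cochain complexes of $k$-modules, and the transition maps of the directed system $(M_i)$, being $G$-equivariant, induce chain maps on the $C^{\bullet}(G,M_i)$.

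Because the index category is directed, the colimit is filtered, and because tensoring with the fixed $k$-module $k[G]^{\otimes n}$ is a left adjoint (to $\Hom_k(k[G]^{\otimes n}, -)$) and hence preserves all colimits, one obtains natural isomorphisms
$$
C^n(G,M) = M \otimes_k k[G]^{\otimes n} \cong \bigl(\varinjlim_i M_i\bigr) \otimes_k k[G]^{\otimes n} \cong \varinjlim_i \bigl(M_i \otimes_k k[G]^{\otimes n}\bigr) = \varinjlim_i C^n(G,M_i)
$$
for each $n$. By naturality of $\partial_n$ these isomorphisms are compatible with the differentials, so they assemble into an isomorphism of cochain complexes $C^{\bullet}(G,M) \cong \varinjlim_i C^{\bullet}(G,M_i)$. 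Next I would invoke that the category of $k$-modules satisfies Grothendieck's axiom (AB5): filtered colimits are exact, hence commute with finite limits such as kernels and with the formation of subquotients. Applying this to $\partial_j$ yields $Z^j(G,M) = \ker \partial_j \cong \varinjlim_i \ker \partial_j^{(i)} = \varinjlim_i Z^j(G,M_i)$ and, passing to cohomology, $H^j(G,M) \cong \varinjlim_i H^j(G,M_i)$, as required. The cocycle statement is recorded only for $j \geq 1$ because $Z^0 = H^0 = M^G$ is already subsumed in the cohomology statement.

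The only point requiring care, and the main potential obstacle, is the exactness needed to pass the colimit through kernels and cohomology; this is precisely where the hypothesis that $\varinjlim$ runs over a directed (filtered) system is essential, since for non-filtered colimits exactness fails and the conclusion would be false. Once filteredness is in hand, no flatness hypothesis on $G$ or on $k[G]$ is needed for this lemma: the tensor functors preserve colimits regardless of flatness, and AB5 is a property of the module category itself.
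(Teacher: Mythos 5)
Your proposal is correct and is essentially the argument behind the paper's proof: the paper simply cites \cite[I.4.17]{Jantzen} for the cohomology statement and the exactness of filtered colimits (via \cite{Rotman}) for the cocycle statement, and your write-up is precisely the content of those references --- termwise commutation of $-\otimes_k k[G]^{\otimes n}$ with directed colimits, naturality of the differentials, and AB5 to pass the colimit through kernels and cohomology. No gaps; your side remarks (why $j\geq 1$ suffices for the $Z^j$ claim, and that flatness of $G$ is not needed here) are also accurate.
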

\begin{proof}
    For the first claim, see \cite[Lemma 4.17]{Jantzen}. For the second claim, note that the inductive limit is left exact and therefore preserves kernel. See \cite[Proposition 5.25, 5.33]{Rotman}.
\end{proof}

\begin{theorem}\label{theorem:reynolds} Let~$G$ be a smooth affine algebraic group scheme over a commutative ring~$k$. 
    \begin{enumerate}[label = ($\alph*$)]
        \item  If ~$G$  has a functorial Reynolds operator, then it is linearly reductive.
        \item Assume that~$k$ is a Dedekind domain. Then a linearly reductive group~$G$ over~$k$ has a functorial Reynolds operator.
    \end{enumerate}
\end{theorem}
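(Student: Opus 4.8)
The plan is to handle the two implications separately, in each case reducing everything to the single $G$-module $k[G]$ equipped with left translation. For part (a) I would note that a functorial Reynolds operator in particular supplies a Reynolds operator $\mathcal{R}_G\colon k[G]\to k[G]^G=k$ for the left regular representation. Its kernel $P\coloneqq\ker\mathcal{R}_G$ is then a $G$-stable complement of the trivial submodule $k$ in $k[G]$, so the hypothesis of Proposition~\ref{reynolds_to_LR} is satisfied and $H^1(G,V)=0$ for every $G$-module $V$; that is, $G$ is linearly reductive. (Only flatness of $G$ is needed here.)

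For part (b), by Proposition~\ref{lem_functorial} it suffices to produce a single Reynolds operator for $k[G]$, equivalently a $G$-equivariant retraction $I\colon k[G]\to k$ with $I(1)=1$. I would first pass to $K\coloneqq\operatorname{Frac}(k)$. By Proposition~\ref{LRG_base_change}(a) the base change $G_K$ is linearly reductive over the field $K$, so over $K$ a Reynolds operator exists, and by Lemma~\ref{unique_complement} applied over $K$ the retraction $I_K\colon K[G]\to K$ of $K\hookrightarrow K[G]$ is \emph{unique}. The problem then becomes to show that this unique generic retraction is integral, i.e.\ $I_K(k[G])\subseteq k$; restricting $I_K$ would then yield the desired $I$. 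This integrality is the step I expect to be the main obstacle: a priori $I_K$ has denominators, and since $\operatorname{Ext}^1$ of $G$-modules over a Dedekind domain typically carries torsion, a splitting that exists generically need not exist integrally.

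To overcome this I would work on finitely generated pieces. As $G$ is smooth over the Noetherian ring $k$, the module $k[G]$ is locally finite (Lemma~\ref{locally_finiteness}, Remark~\ref{lemma:projective_seshadri}) and torsion-free (flatness over a domain), so $k[G]=\bigcup_M M$ over finitely generated $G$-submodules $M\ni 1$. Using the counit $\epsilon$ to see that $k\cdot 1$ is saturated, the quotient $k[G]/k$, and hence each $M/(k\cdot 1)$, is torsion-free; being finitely generated over the Dedekind domain $k$ it is therefore projective, so $k\cdot 1$ splits off as a $k$-module summand of $M$. Consequently the evaluation $\operatorname{ev}_1\colon M^{\ast}\to k$, $\phi\mapsto\phi(1)$, is surjective, and it is $G$-equivariant because $1$ is an invariant vector; here $M^{\ast}$ is a $G$-module by Remark~\ref{alg_operation}, as $M$ is finitely generated and projective. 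Linear reductivity makes $(\cdot)^G$ exact through the long exact sequence attached to~\eqref{hochschild_exact}, so $(M^{\ast})^G\to k^G=k$ is surjective and there exists $I_M\in(M^{\ast})^G$ with $I_M(1)=1$, i.e.\ a $G$-equivariant integral retraction on $M$.

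Finally I would identify $I_M$ with $I_K$ on $M$. Over $K$ the submodule $M_K=M\otimes_k K$ is finite-dimensional, hence injective in the semisimple category of $G_K$-modules, so $M_K$ splits off in $K[G]$; extending $I_M\otimes K$ by zero along such a splitting produces a $G_K$-equivariant retraction of $K\hookrightarrow K[G]$, which by the uniqueness above must equal $I_K$. Restricting gives $I_K|_{M_K}=I_M\otimes K$, so $I_K(M)=I_M(M)\subseteq k$. Letting $M$ range over all finitely generated $G$-submodules containing $1$ yields $I_K(k[G])\subseteq k$, hence the retraction $I$, and Proposition~\ref{lem_functorial} then delivers the functorial Reynolds operator. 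The crucial point, and where the Dedekind hypothesis does real work, is that the global uniqueness of the generic Reynolds operator forces it to agree with the integral retractions $I_M$ piece by piece, thereby circumventing the fact that the $I_M$ need not be glued directly.
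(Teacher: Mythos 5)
Your part (a) is exactly the paper's: both observe that the kernel of a Reynolds operator on $k[G]$ is a $G$-stable complement of the trivial submodule $k$ and invoke Proposition \ref{reynolds_to_LR}. Part (b) is correct but follows a genuinely different route. The paper also reduces, via Proposition \ref{lem_functorial} and local finiteness, to finitely generated $G$-submodules of $k[G]$, but there it splits off the \emph{entire} invariant submodule $V^G$ of each piece $V$: from $H^1(G,V)=0$ one gets $0\to V^G\to V\to Z^1(G,V)\to 0$, one notes that $Z^1(G,V)$ is finitely generated projective over the Dedekind domain so that $\Ext^1_k(Z^1(G,V),V^G)=0$, one applies $\Hom_k(\,\cdot\,,V^G)$ followed by $(\cdot)^G$ to lift $\operatorname{id}_{V^G}$, and one assembles the resulting complements by a direct limit argument resting on the functoriality of $\partial_0$ and its inverse. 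You instead split off only the line $k\cdot 1$ in each piece $M$ --- your observation that $k\cdot 1$ is saturated, so that $M/(k\cdot 1)$ is projective and $\operatorname{ev}_1\colon M^{\ast}\to k$ is onto, does the work that the $\Ext^1$-vanishing does in the paper --- and you globalize not by a direct limit of complements but by matching each $I_M$ against the unique generic Reynolds operator $I_K$ via Lemma \ref{unique_complement}. The two lifting steps are close cousins (both lift a retraction through a surjection of $\Hom$-modules using exactness of $(\cdot)^G$), but your globalization is genuinely different and arguably more robust: the paper's limit argument needs the complements $V_i^c$ to form a directed system, which it extracts from functoriality of $\partial_{0,i}^{-1}$, whereas your uniqueness-over-$\operatorname{Frac}(k)$ comparison removes any compatibility issue. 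Two small points deserve a line in a written-up version: that $I_K|_{k[G]}$ is a comodule map over $k$ and not merely over $K$ (clear because $k[G]\otimes_k k[G]$ injects into $K[G]\otimes_K K[G]$, by flatness of $k[G]$ over the domain $k$), and the standard facts over the field $K$ (existence of a Reynolds operator and complete reducibility for the linearly reductive group $G_K$) that the paper itself only quotes in passing.
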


\begin{proof}
   ~$(a)$ This is immediate from Proposition \ref{reynolds_to_LR}.

   ~$(b)$ 
    By Proposition \ref{lem_functorial}, it is enough to show that~$k = k[G]^G \subseteq k[G]$ has a~$G$-stable complement. To show this, it suffices to prove that for any~$G$-submodule~$V$ of~$k[G]$ that is finitely generated over~$k$, its invariant submodule~$V^G$ has a~$G$-stable complement. Indeed, since~$k$ is Noetherian, ~$k[G]$ is locally finite by Lemma~\ref{locally_finiteness}. Write~$k[G] = \varinjlim_i V_i$, where~$V_i$ is a~$G$-module that is finitely generated over~$k$. Suppose that~$V_i = V_i^G \oplus V_i^c$, where~$V_i^c$ is~$G$-stable. By exact sequence~\ref{hochschild_exact} and the fact that~$H^1(G, V_i) = 0$, it follows that~$\partial_{0, i} \mid_{V_i^c} \colon V_i^c \to Z^1(G, V_i)$ is an isomorphism of~$G$-modules, where~$\partial_{0,i} \coloneqq \Delta_{V_i} - \textnormal{id}_{V_i}$ is the 0-th differential map from the Hochschild's complex. Since~$\partial_{0,i}$ is functorial, so is its inverse, denoted by~$s_i$. Thus, the collection of~$G$-modules~$\{s_i(Z^1(G,V_i))\}_i$ forms a direct system and~$s \colon \varinjlim_i Z^1(G, V_i) \overset{\sim}{\to} \varinjlim_i s_i(Z^1(G,V_i))$ is the inverse to~$\partial_0$.
    Then 
    \begin{align*}
        k[G] &= \varinjlim_i V_i = \varinjlim_i (H^0(G, V_i) \oplus s_i(Z^1(G, V_i)))\\ &= (\varinjlim_i H^0(G, V_i)) \oplus (\varinjlim_i s_i(Z^1(G, V_i))) \\
        &\cong H^0(G, \varinjlim_i V_i) \oplus s(\varinjlim_i Z^1(G, V_i))\\ &\cong H^0(G, k[G]) \oplus Z^1(G, k[G]) \\&= k \oplus Z^1(G, k[G]),
    \end{align*} where the isomorphisms follow from Lemma~\ref{cohomology_lemma} and the arguments above.
    
    Since~$G$ is smooth and has connected geometric fibres, by \cite[Lemma 1]{seshadri}, the~$G$-module~$k[G]$ is projective as a~$k$-module. Since~$k$ is a Dedekind domain, by definition, any finitely generated submodule of~$k[G]$ is also projective. Then it suffices to prove the statement for any~$G$-module~$V$ that is finitely generated and projective over~$k$. Consider the exact sequence of~$G$-modules from ~\ref{hochschild_exact}:
    \begin{center}
        \begin{tikzcd}
            &0 \arrow{r}{} &V^G \arrow{r}{} &V \arrow{r}{} &Z^1(G,V) \arrow{r}{} &0
        \end{tikzcd}
    \end{center} 
    Applying the~$\Hom_k(\cdot, V^G)$ functor yields
    \begin{equation*}
        0 \to \Hom_k(Z^1(G, V), V^G) \to \Hom_k(V, V^G) \to \Hom_k(V^G, V^G) \to \Ext^1_k(Z^1(G,V), V^G) \to \cdots
    \end{equation*} Note that this is indeed an exact sequence of~$G$-modules:
    since~$k$ is a Dedekind domain and a submodule of a finitely generated module is finitely generated, both~$V^G \subseteq V$ and~$Z^1(G, V) \subseteq V \otimes_k k[G]$ are finitely generated and projective, so each term in the above exact sequence carries a~$G$-module structure by Remark \ref{alg_operation};
    each map is~$G$-equivariant since~$\Hom(\cdot, V^G)$ is functorial.
    By \cite[Exercise 2.5.2]{wiebel},~$\Ext^1_k(Z^1(G,V), V^G)$ vanishes since $Z^1(G,V)$ is projective. Therefore, we have the short exact sequence of~$G$-modules:
    \begin{equation*}
        0 \to \Hom_k(Z^1(G, V), V^G) \to \Hom_k(V, V^G) \to \Hom_k(V^G, V^G) \to 0
    \end{equation*} Now, since~$G$ is linearly reductive, applying~$(\cdot)^G$ to the short exact sequence gives 
    \begin{equation*}
        0 \to \Hom_k(Z^1(G, V), V^G)^G \to \Hom_k(V, V^G)^G \overset{\textnormal{res}}{\to} \Hom_k(V^G, V^G)^G \to 0
    \end{equation*} Thus, the map~$\res \colon \Hom_k(V, V^G)^G \to \Hom_k(V^G, V^G)^G \colon \varphi \mapsto \varphi\mid_{V^G}$ is surjective. Hence,~$\identity \in \Hom_k(V^G, V^G)^G$ has a preimage~$\phi \in \Hom_k(V, V^G)$. That is,~$\phi \colon V \to V^G$ is a~$G$-equivariant homomorphism and~$\phi\mid_{V^G} = \identity$, whence~$V^G$ has a~$G$-stable complement~$\ker(\phi)$ in~$V$.
\end{proof}

\begin{rem}
    Over fields, a~$G$-module~$V$ of a linearly reductive group~$G$ is \emph{completely reducible}. That is, any~$G$-submodule~$W$ of~$V$ has a~$G$-stable complement in~$V$. Over commutative rings, this is in general not true. In fact, over~$\mathbb{Z}$, there are no groups that satisfy this property. Indeed, let a group~$G$ act trivially on~$\mathbb{Z}$. Then~$2\mathbb{Z}$ is a~$G$-submodule of~$\mathbb{Z}$ but does not have a~$G$-stable complement in~$\mathbb{Z}$.
\end{rem}

\subsection{The Cohen--Macaulay property of rings of invariants}\label{subsection:cm}

Recall that a module~$M$ over a Noetherian local ring~$k$ is \emph{Cohen--Macaulay} if~$\operatorname{depth}(M) = \dim(M)$, where~$\operatorname{dim}(M)$ is the Krull dimension of~$k/\operatorname{Ann}(M)$. If~$k$ is not necessarily local, then~$M$ is Cohen--Macaulay if for all maximal ideals~$\mathfrak{m}$ such that~$M_{\mathfrak{m}}\neq 0$,~$M_{\mathfrak{m}}$ is Cohen--Macaulay as a~$k_{\mathfrak{m}}$-module. Moreover, a ring is \emph{Cohen--Macaulay} if it is Cohen--Macaulay as a module over itself.

The Cohen--Macaulay property measures the local equidimensionality of rings and therefore has various applications in algebraic geometry.  Over fields, Hochster and Roberts showed that for a linearly reductive group~$G$ over~$k$ and a rational representation~$V$, the ring of invariants~$k[V]^G$ is Cohen--Macaulay (\cite{Hochster-Robert}). Later, this was also shown for the coordinate rings of smooth affine~$G$-varieties by Hochster and Huneke (\cite[Proposition 4.12]{hochster-huneke}). When~$k = \mathbb{Z}$ and~$G$ is finite, it was studied in \cite[Corollay 6.2.12]{AA2018} that~$\mathbb{Z}[x_1, \dots, x_n]^G$ is Cohen--Macaulay if and only if~$\mathbb{F}_p[x_1, \dots, x_n]^G$ is Cohen--Macaulay for all primes~$p$ dividing the order of~$G$. We will generalize Hochster and Huneke's result to Dedekind domains.

The following lemma from \cite{BOUCHIBA} is useful for the rest of the section.
\begin{lemma}\label{cm_flat}
    Let~$R$ be a Cohen--Macaulay ring. Let~$A$ and~$B$ be~$R$-algebras. Assume that~$B$ is flat over~$R$. Then the following assertions are equivalent:
    \begin{enumerate}[label=(\alph*)]
        \item~$A \otimes_R B$ is a Cohen--Macaulay ring.
        \item~$A_{\mathfrak{p}}$ and~$B_{\mathfrak{q}}$ are Cohen--Macaulay rings for any prime ideals~$\mathfrak{p}$ of~$A$ and~$\mathfrak{q}$ of~$B$ such that~$\mathfrak{p} \cap R = \mathfrak{q} \cap R$.
    \end{enumerate} 
\end{lemma}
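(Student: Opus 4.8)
The plan is to prove both implications by reducing, via localization, to a chain of flat local homomorphisms and then invoking the standard criterion for the Cohen--Macaulay property to descend and ascend along them. The tool I would use throughout is the following classical fact: if $\varphi\colon (R',\mathfrak{m}')\to(S',\mathfrak{n}')$ is a flat local homomorphism of Noetherian local rings, then $S'$ is Cohen--Macaulay if and only if both $R'$ and the closed fibre $S'/\mathfrak{m}'S'$ are Cohen--Macaulay; this follows from the additivity formulas $\dim S' = \dim R' + \dim(S'/\mathfrak{m}'S')$ and $\operatorname{depth} S' = \operatorname{depth} R' + \operatorname{depth}(S'/\mathfrak{m}'S')$. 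Since $A\otimes_R B$ is Cohen--Macaulay exactly when each localization $(A\otimes_R B)_{\mathfrak{P}}$ is, and since the contractions $\mathfrak{p} = \mathfrak{P}\cap A$, $\mathfrak{q}=\mathfrak{P}\cap B$, $\mathfrak{r}=\mathfrak{P}\cap R$ of any prime $\mathfrak{P}$ automatically satisfy $\mathfrak{p}\cap R = \mathfrak{q}\cap R = \mathfrak{r}$, it is enough to analyse one localization at a time. Conversely, a compatible pair $(\mathfrak{p},\mathfrak{q})$ with $\mathfrak{p}\cap R = \mathfrak{q}\cap R = \mathfrak{r}$ is always dominated by some $\mathfrak{P}$, because the primes of $A\otimes_R B$ lying over $(\mathfrak{p},\mathfrak{q})$ correspond to primes of $\kappa(\mathfrak{p})\otimes_{\kappa(\mathfrak{r})}\kappa(\mathfrak{q})$, a nonzero tensor product of field extensions; so the two quantifications in the statement match up.

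Next I would set up the relevant flat local homomorphisms. Because $B$ is flat over $R$, the base change $A\to A\otimes_R B$ is flat, giving a flat local homomorphism $A_{\mathfrak{p}}\to (A\otimes_R B)_{\mathfrak{P}}$ whose closed fibre is the localization $(\kappa(\mathfrak{p})\otimes_R B)_{\widetilde{\mathfrak{P}}}$ at the prime $\widetilde{\mathfrak{P}}$ corresponding to $\mathfrak{P}$. Likewise $R_{\mathfrak{r}}\to B_{\mathfrak{q}}$ is flat local with closed fibre $(\kappa(\mathfrak{r})\otimes_R B)_{\mathfrak{q}'}$, the localization at the prime $\mathfrak{q}'$ corresponding to $\mathfrak{q}$; since $R$ is Cohen--Macaulay, the criterion shows that $B_{\mathfrak{q}}$ is Cohen--Macaulay if and only if this fibre is. Finally, the residue-field extension $\kappa(\mathfrak{r})\to\kappa(\mathfrak{p})$ induces a flat local homomorphism $(\kappa(\mathfrak{r})\otimes_R B)_{\mathfrak{q}'}\to(\kappa(\mathfrak{p})\otimes_R B)_{\widetilde{\mathfrak{P}}}$, using that $\kappa(\mathfrak{q})$ is also the residue field of the fibre $\kappa(\mathfrak{r})\otimes_R B$ at $\mathfrak{q}'$. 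With these three maps in hand, the forward implication $(a)\Rightarrow(b)$ is pure descent: from $(A\otimes_R B)_{\mathfrak{P}}$ Cohen--Macaulay the first map gives that $A_{\mathfrak{p}}$ and its closed fibre are Cohen--Macaulay; the third map then descends Cohen--Macaulayness to $(\kappa(\mathfrak{r})\otimes_R B)_{\mathfrak{q}'}$, and the second map yields $B_{\mathfrak{q}}$ Cohen--Macaulay.

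For $(b)\Rightarrow(a)$ I would run the same chain in reverse, and this is where the only non-formal input enters. Starting from $B_{\mathfrak{q}}$ Cohen--Macaulay, the second map gives $(\kappa(\mathfrak{r})\otimes_R B)_{\mathfrak{q}'}$ Cohen--Macaulay; to ascend along the third (field base change) map I must know that \emph{its} closed fibre is Cohen--Macaulay, and that fibre is a localization of $\kappa(\mathfrak{p})\otimes_{\kappa(\mathfrak{r})}\kappa(\mathfrak{q})$. Thus the crux of the argument is the claim that a tensor product of two field extensions of a common base field is Cohen--Macaulay, and I expect this to be the main obstacle. The way I would dispatch it is to reduce to a finitely generated subextension $K_0/\kappa(\mathfrak{r})$ of $\kappa(\mathfrak{p})$, write $K_0$ as a finite extension of a purely transcendental subfield $F=\kappa(\mathfrak{r})(t_1,\dots,t_d)$, and observe that $K_0\otimes_{\kappa(\mathfrak{r})}\kappa(\mathfrak{q})$ is then a finite free module over $F\otimes_{\kappa(\mathfrak{r})}\kappa(\mathfrak{q})$, which is a localization of the polynomial ring $\kappa(\mathfrak{q})[t_1,\dots,t_d]$ and hence regular; a finite free extension of a regular ring has Artinian closed fibres and is therefore Cohen--Macaulay by the same flat-local-homomorphism criterion. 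Granting this, the third map ascends Cohen--Macaulayness to $(\kappa(\mathfrak{p})\otimes_R B)_{\widetilde{\mathfrak{P}}}$, which is precisely the closed fibre of the first map; since $A_{\mathfrak{p}}$ is Cohen--Macaulay by hypothesis, the criterion gives $(A\otimes_R B)_{\mathfrak{P}}$ Cohen--Macaulay, as desired. In the applications all rings are of finite type over a Noetherian base, so the residue-field extensions are automatically finitely generated and the Noetherianity required to apply the criterion is guaranteed; the general case is reached by a routine direct-limit reduction.
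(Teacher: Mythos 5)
Your proposal is correct, but it is not comparable to the paper's ``proof'' in the usual sense: the paper proves this lemma by citation alone (``See Corollary 2.10 of [BCLM18]''), whereas you have reconstructed an actual argument --- and it is essentially the argument underlying the cited result of Bouchiba--Conde-Lago--Majadas, whose whole point is to control the Cohen--Macaulay defect of $A\otimes_R B$ through the three flat local homomorphisms you set up and through the fibre rings $\kappa(\mathfrak{p})\otimes_{\kappa(\mathfrak{r})}\kappa(\mathfrak{q})$. Your identification of the one non-formal input --- that a tensor product of two field extensions of a common base is Cohen--Macaulay --- is exactly right, and your Noether-normalization proof of it (finite free over a localization of $\kappa(\mathfrak{q})[t_1,\dots,t_d]$, hence flat local with Artinian closed fibres over a regular ring) is sound. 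The only place where you are waving rather than proving is the final sentence: the flat-local-homomorphism criterion, and indeed the very definition of Cohen--Macaulay used in this paper, requires Noetherian local rings, so the ``routine direct-limit reduction'' for a non-finitely-generated residue field extension $\kappa(\mathfrak{p})/\kappa(\mathfrak{r})$ is not routine (Cohen--Macaulayness does not pass to non-Noetherian direct limits in any standard sense, and the statement itself silently presupposes $A\otimes_R B$ Noetherian for (a) to be meaningful). This costs nothing for the paper's applications, where $A=k[X]^G$ and $B$ is a fraction field or localization of a Dedekind domain, so all rings in sight are Noetherian and the residue field extensions are finitely generated; but if you want the lemma in the stated generality you should either add the Noetherian hypotheses explicitly or fall back on the citation, as the author does.
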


\begin{proof}
    See Corollary 2.10 of \cite{BOUCHIBA}.
\end{proof}

\begin{theorem}\label{CM}
    Let~$G$ be a linearly reductive group over a Dedekind domain~$k$. Let~$X$ be a smooth affine algebraic~$G$-scheme over~$k$ with connected generic fibre. Let~$k[X]$ be the coordinate ring of~$X$. Then~$k[X]^G$ is Cohen--Macaulay.
\end{theorem}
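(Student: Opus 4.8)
The plan is to reduce the global Cohen--Macaulay statement to a fibrewise one via Lemma~\ref{cm_flat}, and then to import the field-case results of Hochster--Roberts and Hochster--Huneke on each fibre. Since $k$ is a Dedekind domain, hence a one-dimensional regular (in particular Cohen--Macaulay) ring, I would first aim to check that $k[X]^G$ is flat over $k$. This is where I expect to use the machinery built up earlier: because $G$ is linearly reductive, taking $G$-invariants is exact, and by Theorem~\ref{theorem:reynolds}(b) a functorial Reynolds operator exists, exhibiting $k[X]^G$ as a $G$-stable direct summand of the flat (indeed projective, since $X$ is smooth and $G$ is smooth with connected geometric fibres) $k$-module $k[X]$. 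A direct summand of a flat module is flat, so $k[X]^G$ is flat, hence torsion-free, over the Dedekind domain $k$.

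\medskip

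With flatness in hand, I would apply Lemma~\ref{cm_flat} with $R = k$, $A = k[X]^G$, and $B = k$ itself, or more usefully phrase the Cohen--Macaulay criterion as a fibrewise condition: $A = k[X]^G$ is Cohen--Macaulay if it is flat over the Cohen--Macaulay base $k$ and each fibre $A \otimes_k \kappa(\mathfrak{p})$ is Cohen--Macaulay for every prime $\mathfrak{p}$ of $k$. The key compatibility I need is that forming invariants commutes with base change to the residue fields. For the generic fibre, $\mathfrak{p} = (0)$, this is flat base change and $(\cdot)^G$ commutes with it by the remark following the definition of invariants, giving $k[X]^G \otimes_k \operatorname{Frac}(k) \cong \kappa((0))[X_{\operatorname{Frac}(k)}]^{G_{\operatorname{Frac}(k)}}$; since the generic fibre is smooth and connected, Hochster--Huneke applies and this is Cohen--Macaulay. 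For a closed point $\mathfrak{p}$, the residue field $\kappa(\mathfrak{p})$ arises by a non-flat (surjective) base change, so I would instead use Proposition~\ref{base_change_representation}: the surjection $k \twoheadrightarrow \kappa(\mathfrak{p})$ and the existence of the Reynolds operator should let me identify the fibre of $k[X]^G$ with $(\kappa(\mathfrak{p})[X_{\kappa(\mathfrak{p})}])^{G_{\kappa(\mathfrak{p})}}$, where $G_{\kappa(\mathfrak{p})}$ is linearly reductive over the field $\kappa(\mathfrak{p})$ by Proposition~\ref{LRG_base_change}(a) and $X_{\kappa(\mathfrak{p})}$ is smooth; then Hochster--Huneke again gives Cohen--Macaulayness.

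\medskip

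The main obstacle I anticipate is precisely the closed-fibre base-change step, because taking invariants does \emph{not} in general commute with the non-flat base change $k \to \kappa(\mathfrak{p})$, and naively $k[X]^G \otimes_k \kappa(\mathfrak{p})$ need not equal $(k[X] \otimes_k \kappa(\mathfrak{p}))^{G}$. The way to overcome this is to exploit flatness of $k[X]^G$ together with the Reynolds decomposition: writing $k[X] = k[X]^G \oplus N$ as $G$-modules (with $N = \ker \mathcal{R}_G$), both summands are flat over $k$, so tensoring the decomposition with $\kappa(\mathfrak{p})$ stays a direct sum and the functorial Reynolds operator base-changes to a Reynolds operator over $\kappa(\mathfrak{p})$; this forces $k[X]^G \otimes_k \kappa(\mathfrak{p})$ to be exactly the invariant subring of the fibre. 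I would take care to verify that the $G$-scheme structure and smoothness descend to each fibre, that $X_{\kappa(\mathfrak{p})}$ remains the correct object (smoothness is preserved under base change, though connectedness of the closed fibres is not assumed and is not needed, since Hochster--Huneke applies componentwise on a smooth affine variety). Finally, I would assemble the fibrewise Cohen--Macaulay conclusions through Lemma~\ref{cm_flat} to deduce that $k[X]^G$ is Cohen--Macaulay as a $k$-algebra.
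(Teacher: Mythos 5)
Your proposal is essentially correct, but it takes a genuinely different route from the paper's. The paper does not use the Reynolds operator at all: it first observes that $k[X]^G$ is a domain, localizes at a maximal ideal $\mathfrak{N}$ lying over $\mathfrak{n}=(x)\subseteq k$, identifies $S/xS$ with the invariants of the closed fibre via the universal coefficient sequence of \cite[Proposition 4.18]{Jantzen} (which needs only $H^1(G,k[X])=0$ and flatness of $k[X]$, not a splitting), and then concludes by the regular-element criterion \cite[Theorem 2.1.3(a)]{bruns_herzog} that $S_{\mathfrak{N}}$ is Cohen--Macaulay; the generic-fibre case is handled with Lemma~\ref{cm_flat} exactly as you do. Your route instead establishes flatness of $k[X]^G$ from the Reynolds splitting and invokes the ``flat over a CM base with CM fibres'' criterion --- note that this is \emph{not} Lemma~\ref{cm_flat} as stated, so you would need to cite the local flatness criterion (e.g.\ Matsumura or EGA IV 6.3.5) separately. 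Your approach is closer in spirit to the alternative the paper itself records in the remark after Corollary~\ref{cm_poly}, and it buys you flatness of $k[X]^G$ without the domain hypothesis; the cost is that you must route through Theorem~\ref{theorem:reynolds}(b), which requires $G$ smooth with connected geometric fibres --- hypotheses not explicitly imposed in the statement of Theorem~\ref{CM}.

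The one genuine soft spot is your closed-fibre base-change step. Writing $k[X]=k[X]^G\oplus N$ and tensoring with $\kappa(\mathfrak{p})$ shows that $k[X]^G\otimes_k\kappa(\mathfrak{p})$ sits inside $(k[X]\otimes_k\kappa(\mathfrak{p}))^{G_{\kappa(\mathfrak{p})}}$ and that the base-changed $\mathcal{R}_G$ is an equivariant projection onto it, but asserting that this ``forces'' equality begs the question: you must still rule out invariants appearing in $N\otimes_k\kappa(\mathfrak{p})$, i.e.\ show $(N\otimes_k\kappa(\mathfrak{p}))^{G_{\kappa(\mathfrak{p})}}=0$, which is exactly the content of the base-change statement. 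This can be repaired: since $N^G=0$ and $H^1(G,N)=0$, the Hochschild differential embeds $N$ into $N\otimes_k k[G]$ with cokernel isomorphic to a submodule of $N\otimes_k k[G]\otimes_k k[G]$, hence torsion-free and therefore flat over the Dedekind domain $k$; tensoring with $\kappa(\mathfrak{p})$ then kills no kernel and gives $(N\otimes_k\kappa(\mathfrak{p}))^{G_{\kappa(\mathfrak{p})}}=0$. (Equivalently, apply the universal coefficient sequence of \cite[Proposition 4.18]{Jantzen} to the flat module $N$ --- which is precisely the tool the paper uses in place of the splitting.) With that step supplied, your argument goes through.
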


\begin{proof}
    By the assumption on~$X$,~$k[X] \subseteq K[X_K]$ is an integral domain and therefore,~$k[X]^G \subseteq k[X]$ is also an integral domain.
    We will show that~$k[X]^G_{\mathfrak{M}}$ is Cohen--Macaulay for all maximal ideals~$\mathfrak{M}$ of~$k[X]^G$.

    We first prove the case that~$\mathfrak{M} \cap k = (0)$. By \cite[Prposition 4.12]{hochster-huneke},~$k[X]^G \otimes_k K  \cong K[X]^{G_K}$ is Cohen--Macaulay. Then in Lemma \ref{cm_flat}, by letting~$A = k[X]^G$,~$B = K$, the fact that~$k[X]^G \otimes_k k_{(0)} \cong k[X]^G \otimes_k K$ is Cohen--Macaulay implies that~$k[X]^G_{\mathfrak{M}}$ is Cohen--Macaulay for any maximal ideal~$\mathfrak{M}$ such that~$\mathfrak{M} \cap k = (0) \cap k = (0)$, since the only prime ideal of~$K$ is~$(0)$. Hence,~$k[X]^G_{\mathfrak{M}}$ is Cohen--Macaulay for all maximal ideals~$\mathfrak{M}$ such that~$\mathfrak{M} \cap k = (0).$
    
     If~$\mathfrak{M} \cap k \neq (0)$, then~$\mathfrak{m} \colonequals \mathfrak{M} \cap k$ is a maximal ideal of~$k$, since~$\mathfrak{M} \cap k$ is a nonzero prime ideal of~$k$, which has Krull dimension one. By \cite[Lemma 2]{seshadri} , for any maximal ideal~$\mathfrak{m}$ of~$k$,~$k[X]^G \otimes_k k_{\mathfrak{m}} \cong k_{\mathfrak{m}}[X \times_{\operatorname{Spec}(k)} \operatorname{Spec}(k_{\mathfrak{m}})]^{G_{k_{\mathfrak{m}}}}$. Therefore, it suffices to prove that $S \coloneqq k[X]^G$ is Cohen-Macaulay when $k$ is a principal ideal domain: in Lemma \ref{cm_flat}, by letting~$A = k[X]^G$ and~$B = k_{\mathfrak{m}}$, the fact that~$k[X]^G \otimes_k k_{\mathfrak{m}}$ is Cohen--Macaulay implies that~$k[X]^G_{\mathfrak{M}}$ is Cohen--Macaulay for all maximal ideals~$\mathfrak{M}$ such that~$\mathfrak{M} \cap k = \mathfrak{m}$.

     Let~$\mathfrak{N}$ be a maximal ideal of~$S$. If~$\mathfrak{N} \cap k \neq (0)$, then~$\mathfrak{n} \colonequals \mathfrak{N} \cap k$ is a maximal ideal of ~$k.$ Indeed,~$\mathfrak{N} \cap k$ is a nonzero prime ideal of~$k$, which has Krull dimension one.  Let~$\mathfrak{n}$ be generated by~$x$ for some~$x \in k$. By \cite[Proposition 4.18]{Jantzen}, 
    \begin{equation*}
        S/xS = k[X]^G \otimes k/\mathfrak{n} \cong ((k/\mathfrak{n})[X])^{G_{k/\mathfrak{n}}}
    \end{equation*} which is Cohen--Macaulay by \cite[Proposition 4.12]{hochster-huneke}. Then
    \begin{equation*}
        S_{\mathfrak{N}}/xS_{\mathfrak{N}} \cong (S/xS)_{\mathfrak{N}/(xS\cap \mathfrak{N})},
    \end{equation*} is also Cohen--Macaulay by definition. Since~$x$ is not a zero-divisor of~$S$, it is also not a zero-divisor of~$S_{\mathfrak{N}}$. Thus,~$x$ forms a regular sequence of~$S_{\mathfrak{N}}$. Hence,~$S_{\mathfrak{N}}$ is Cohen--Macaulay by Theorem~\cite[Theorem 2.1.3(a)]{bruns_herzog}. 
    
    It remains to prove the claim for the case that~$\mathfrak{N} \cap k = (0)$. We can prove the same way as in the second paragraph.
\end{proof}

By taking~$X = \mathbb{A}^n_k$, we obtain the immediate corollary:
\begin{cor}\label{cm_poly}
    Let~$k[x_1, x_2, \cdots, x_n]$ be a polynomial ring over a Dedekind domain~$k$ and let ~$G$ be a linearly reductive group over~$k$. The ring of invariants~$k[x_1, x_2, \cdots, x_n]^G$ is Cohen--Macaulay. 
\end{cor}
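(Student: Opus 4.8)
The plan is to specialize Theorem \ref{CM} to the case $X = \mathbb{A}^n_k = \operatorname{Spec}(k[x_1, x_2, \ldots, x_n])$, so that the entire task reduces to checking that this particular $X$ meets the hypotheses of that theorem; once it does, the conclusion is immediate. First I would record that $\mathbb{A}^n_k$ is affine by construction, with coordinate ring $k[X] = k[x_1, x_2, \ldots, x_n]$, and of finite type over $k$, hence an affine algebraic scheme over $k$. The given rational action of $G$ on the polynomial ring — equivalently, a comodule map $k[x_1, \ldots, x_n] \to k[x_1, \ldots, x_n] \otimes k[G]$ in the sense of Section \ref{sec_affine_group_scheme} — is exactly the data making $\mathbb{A}^n_k$ into a $G$-scheme, and under this identification $k[X]^G$ is precisely the ring of invariants $k[x_1, \ldots, x_n]^G$ appearing in the statement.

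Next I would verify the two geometric hypotheses. Smoothness of $\mathbb{A}^n_k \to \operatorname{Spec}(k)$ is standard, as affine space is smooth over any base. For the generic fibre, writing $K = \operatorname{Frac}(k)$, we have $X_K = \mathbb{A}^n_K = \operatorname{Spec}(K[x_1, \ldots, x_n])$; since $K[x_1, \ldots, x_n]$ is an integral domain, its spectrum is irreducible and in particular connected, so $X_K$ is connected as required.

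With every hypothesis of Theorem \ref{CM} in place — $k$ a Dedekind domain, $G$ linearly reductive over $k$, and $X = \mathbb{A}^n_k$ a smooth affine algebraic $G$-scheme with connected generic fibre — the theorem yields that $k[X]^G = k[x_1, \ldots, x_n]^G$ is Cohen--Macaulay, completing the proof. I do not expect any genuine obstacle here: the substantive work is entirely absorbed into Theorem \ref{CM}, and the only points needing a word of comment are the smoothness of affine space and the irreducibility of the polynomial ring over $K$, both of which are routine. In other words, this is an immediate corollary, and the \enquote{hard part} has already been dispatched in the proof of the preceding theorem.
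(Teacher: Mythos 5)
Your proposal is correct and matches the paper exactly: the paper's entire proof is the one-line observation that the corollary follows from Theorem \ref{CM} by taking $X = \mathbb{A}^n_k$, and your verification of the hypotheses (affineness, smoothness of affine space, connectedness of the generic fibre via integrality of $K[x_1,\dots,x_n]$) simply makes explicit what the paper leaves implicit. No further comment is needed.
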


\begin{rem}
    Note that Theorem \ref{CM} can also be obtained by the fact that a pure subring of a regular ring is Cohen--Macaulay (see \cite[Theorem 4.1.12]{HH20}, \cite[Corollary 4.3]{HM18}) after we proved Theorem \ref{theorem:reynolds}(b).
\end{rem}

\section{Different Notions of Reductive Groups}\label{sec_rg}
In this section, we first recall some notions of reductive group schemes that are known in the literature and study the relationships between them. Then we compare them to our new notion of linearly reductive groups. Throughout this section, we assume that~$G$ is a flat affine algebraic group scheme over a commutative ring~$k,$ unless specified otherwise. 

\subsection{Reductive group schemes}\label{sec_reductive}

Recall that over a field~$K$, a linear algebraic group~$G$ is \emph{reductive} if its unipotent radical, the largest connected unipotent normal subgroup of~$G$ with respect to inclusion, is trivial. Over an arbitrary ring, we adopt the notion of reductivity introduced in \cite{SGA73}. An affine group scheme~$G$ over~$k$ is \emph{reductive} if
    \begin{enumerate}[label =$\left(\alph*\right)$]
        \item $G$ is smooth over~$k$.
        \item For all~$x\in \operatorname{Spec}(k)$, the geometric fibre~$G \times_k \overline{\kappa(x)}$ is connected and reductive, where~$\overline{\kappa(x)}$ denotes the algebraic closure of the residue field.
    \end{enumerate}

The notion of geometric reductivity (over fields) originated in Mumford's Conjecture: 
\begin{conj*}(\cite[Mumford's Conjecture]{git})
    Let~$G$ be a semisimple algebraic group and~$V$ a~$G$-module. If~$V_0$ is a~$G$-invariant submodule of codimension one, then for some ~$d\in \mathbb{Z}_{+}$,~$V_0\cdot S^{d-1}(V) \subseteq S^d(V)$ has a~$G$-stable invariant complement of dimension one.
\end{conj*}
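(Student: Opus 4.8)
The plan is to separate the two characteristic regimes, since the statement is formal in characteristic zero and is exactly Haboush's theorem in positive characteristic. First I would reduce to the case in which $k$ is algebraically closed and $G$ is semisimple and simply connected: the submodule $V_0\cdot S^{d-1}(V)\subseteq S^d(V)$ and the existence of a one--dimensional $G$-stable complement both commute with the faithfully flat base change $k\to\bar k$, so a complement found over $\bar k$ descends, and replacing $G$ by its simply connected cover changes neither the relevant representations nor the invariant lines in an essential way (the finite central kernel acts trivially on any line isomorphic to the trivial module). Throughout, I would use that a one--dimensional $G$-stable complement to $V_0\cdot S^{d-1}(V)$ is the same datum as a $G$-equivariant splitting of
\begin{equation*}
0\longrightarrow V_0\cdot S^{d-1}(V)\longrightarrow S^d(V)\longrightarrow S^d(V/V_0)=k\longrightarrow 0,
\end{equation*}
the quotient being trivial because a semisimple group has no nontrivial characters. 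Thus the task is to produce, for some $d>0$, a class in $S^d(V)^G$ mapping onto a generator of $k$, equivalently to kill the class of this extension in $H^1(G,\,V_0\cdot S^{d-1}(V))$.

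If $\operatorname{char}k=0$ the group $G$ is linearly reductive, so $H^1(G,V_0)=0$ by Definition~\ref{def:LRG} and the sequence already splits with $d=1$. The whole content therefore lies in the case $\operatorname{char}k=p>0$, where no Reynolds operator is available and the degree-one sequence generally fails to split.

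In positive characteristic the device I would use is the Frobenius. The $p^r$-th power map $V\to S^{p^r}(V)$ is additive (by the vanishing of the intermediate binomial coefficients) and Frobenius-semilinear, so it induces a $G$-equivariant embedding of the $r$-th Frobenius twist $V^{(r)}$ into $S^{p^r}(V)$ that carries $(V/V_0)^{(r)}=k$ isomorphically onto the degree-$p^r$ quotient. This expresses the degree-$p^r$ extension in terms of the $r$-th Frobenius twist of the original one, and reduces the problem to showing that the twisted class dies in $H^1$ for $r$ large. That vanishing is not a formal consequence of twisting; I would obtain it by tensoring with the Steinberg module $\operatorname{St}_r=L\big((p^r-1)\rho\big)$, where $\rho$ is the half-sum of the positive roots. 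The Steinberg module is irreducible and self-dual and is simultaneously a Weyl and a dual Weyl module; by the theorem of Donkin and Mathieu, $\operatorname{St}_r\otimes M$ has a good filtration whenever $M$ does, and any module $M$ with a good filtration satisfies $\operatorname{Ext}^1_G(k,M)=0$. Transporting the canonical invariant of $\operatorname{St}_r\otimes\operatorname{St}_r$ furnished by self-duality back through the $p^r$-power map then yields the required invariant in $S^d(V)^G$ for a suitable $d$, a power of $p$.

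The hard part is precisely this last construction. Everything up to the positive-characteristic step is formal bookkeeping, but the vanishing that makes the invariant exist is false after naive Frobenius twisting and becomes available only through the special arithmetic of the Steinberg module: its self-duality and the stability of good filtrations under tensor products. These are deep facts in the modular representation theory of semisimple groups, and in practice I would cite Haboush's theorem (and the Donkin--Mathieu filtration result) rather than reprove them here.
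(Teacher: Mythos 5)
The first thing to note is that the paper does not prove this statement at all: it is displayed as Mumford's conjecture, quoted from \cite{git} purely as motivation for the definition of geometric reductivity, and the only justification the paper offers for its truth is the sentence crediting Haboush with its resolution together with the citation \cite{haboush}. So there is no in-paper argument to measure yours against, only the literature. Your formal reductions are all correct: passing to $\bar{k}$ and to the simply connected cover, the identification of a one-dimensional $G$-stable complement with a $G$-equivariant splitting of $0\to V_0\cdot S^{d-1}(V)\to S^d(V)\to S^d(V/V_0)\cong k\to 0$ (using that a semisimple group has no nontrivial characters), the immediate $d=1$ splitting in characteristic zero by complete reducibility, and the $G$-equivariant embedding $V^{(r)}\hookrightarrow S^{p^r}(V)$ given by $v\mapsto v^{p^r}$.

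The mechanism you propose for the positive-characteristic step, however, does not work as described, and the gap is not merely one of omitted detail. Tensoring the twisted extension $0\to V_0^{(r)}\to V^{(r)}\to k\to 0$ with $\operatorname{St}_r$ asks for the vanishing of $\operatorname{Ext}^1_G(\operatorname{St}_r,\,V_0^{(r)}\otimes\operatorname{St}_r)\cong H^1\bigl(G,\,V_0^{(r)}\otimes\operatorname{End}(\operatorname{St}_r)\bigr)$. But $\operatorname{St}_r$ is irreducible, projective and injective over the $r$-th Frobenius kernel $G_r$, and $V_0^{(r)}$ restricts trivially to $G_r$, so the Lyndon--Hochschild--Serre spectral sequence collapses to give $H^1\bigl(G,\,V_0^{(r)}\otimes\operatorname{End}(\operatorname{St}_r)\bigr)\cong H^1\bigl(G/G_r,\,V_0^{(r)}\bigr)$, and under this identification the class of your tensored extension is exactly the inflated class you started with: tensoring with the Steinberg module is invisible to Frobenius-inflated classes and cannot kill them. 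Relatedly, ``transporting the canonical invariant of $\operatorname{St}_r\otimes\operatorname{St}_r$ back'' is obstructed, because the only $G$-equivariant functional $\operatorname{End}(\operatorname{St}_r)\to k$ is the trace and $\operatorname{tr}(\operatorname{id})=\dim\operatorname{St}_r$ is a positive power of $p$, hence zero; the unit $k\to\operatorname{End}(\operatorname{St}_r)$ is not $G$-split. What actually makes Haboush's argument work is that the class to be killed lives in $H^1\bigl(G,\,V_0\cdot S^{p^r-1}(V)\bigr)$, the image of the twisted class under the inclusion $V_0^{(r)}\hookrightarrow V_0\cdot S^{p^r-1}(V)$: one must exploit the full symmetric power rather than just the image of $V^{(r)}$, and the Steinberg module enters through the much more delicate step of lifting $\operatorname{id}\in\operatorname{End}(\operatorname{St}_r)^G$ along suitable surjections built from $V$. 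Since your final sentence defers to Haboush in any case, the statement you are defending is true, but the outline you give is not one that could be completed as written.
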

\noindent Haboush gave an affirmative answer to the Mumford's Conjecture by showing that reductive groups are geometrically reductive. There the geometric reductivity is formulated with the dual of the representations so that one can work with polynomials. Seshadri extended this definition to Noetherian rings in \cite{seshadri}: 
\begin{defn}\label{seshadri_gr}
    A smooth affine algebraic group scheme~$G$ over a Noetherian ring~$k$ with connected geometric fibres is \emph{geometrically reductive} if for any finitely generated free~$G$-module~$V$ and  any field~$F$ that is also a~$k$-algebra on which~$G$ acts trivially, for any nonzero~$v\in (V \otimes_k F)^{G_F}$, there exist a positive integer~$d$ and~$f \in S^d(V^{\ast})^G$ such that~$f(v) \neq 0$.
\end{defn}
\noindent  It was also shown that reductive groups are geometrically reductive over Noetherian rings \cite[Theorem 1]{seshadri}. 
We will show in the next section that over a regular ring of Krull dimension at most two, geometrically reductive groups are reductive.

Although over fields Haboush's formulation of geometric reductivity is parallel with Mumford's conjecture, over rings Seshadri's formulation is not. 
There are some disadvantages to working with Seshadri's definition over an arbitrary base. 
For example, finiteness and freeness of ~$G$-modules are not always preserved via restriction of scalars. 
This provides a restrictive view of representations. 
Moreover, it does not give enough information for~$G$-modules over the base ring. In \cite{Franjou_VanderKallen}, Franjou--van der Kallen formulated power reductivity over arbitrary bases which is compatible with the setup in Mumford's Conjecture: 
\begin{defn}\label{PR_1}
    A flat affine algebraic group scheme~$G$ over a commutative ring~$k$ is \emph{power reductive} if for any surjective~$G$-module homomorphism~$\varphi: M \twoheadrightarrow k$, there exists a positive integer~$d$ such that the map induced by the~$d$-th symmetric power
    \begin{equation*}
        S^d(\varphi): S^d(M) \longtwoheadrightarrow S^d(k)
    \end{equation*} splits. That is,~$\ker(S^d(\varphi))$ has a~$G$-stable complement in~$S^d(M)$.
\end{defn}

Over fields, power reductivity is equivalent to geometric reductivity (\cite{Kallen2004ARG}). Over a commutative unital ring, a reductive group is power reductive (\cite[Theorem 12]{Franjou_VanderKallen}) and that a power reductive group is geometrically reductive (\cite[Lemma 12]{PR_vdK}). Over a discrete valuation ring, a geometrically reductive group is power reductive (\cite[Lemma 13]{PR_vdK}).

\subsection{Geometrically reductive groups over regular rings of dimension at most two} 
Throughout this section, we assume~$G$ is a smooth affine algebraic group scheme over a Noetherian ring~$k$ with connected geometric fibres. Under such assumptions, the local finiteness property is satisfied for all~$G$-modules \cite[Lemma 1]{seshadri}. Consider the following generalized notion of geometric reductivity using projective modules:

\begin{defn}\label{GR_Proj}
    A group~$G$ is \emph{strongly geometrically reductive} over~$k$ if for any finitely generated projective~$G$-module~$M$, any~$k$-algebra~$F$ that is a field, and any nonzero~$v \in (M \otimes_k F)^{G_F}$, there exists a positive integer~$d$ and~$f \in S^d(M^{\ast})^{G}$, such that~$f(v) \neq 0$. 
\end{defn}

Notice that this new version of geometric reductivity coincides with Seshadri's definition of geometric reductivity over fields, but is in general stronger than Seshadri's definition. It is easier to work with this stronger definition in order to apply the~$G$-equivariant resolution property. It turns out that if the base is a regular ring of Krull dimension at most two, the notions of geometric reductivity and strong geometric reductivity coincide.

\begin{lemma}\label{strong_GR}
    Over a regular ring~$k$ of Krull dimension at most two, a group~$G$ is geometrically reductive if and only if it is strongly geometrically reductive.
\end{lemma}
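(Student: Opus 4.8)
The forward implication is immediate: a finitely generated free $G$-module is in particular a finitely generated projective $G$-module, so the separation property demanded by strong geometric reductivity (Definition \ref{GR_Proj}) specializes verbatim to the free modules appearing in Seshadri's definition (Definition \ref{seshadri_gr}). Thus the plan is to concentrate on the converse, that geometric reductivity implies strong geometric reductivity.

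For that direction, fix a finitely generated projective $G$-module $M$, a field $F$ that is a $k$-algebra, and a nonzero $v \in (M \otimes_k F)^{G_F}$; the goal is a positive integer $d$ and an invariant $f \in S^d(M^{\ast})^G$ with $f(v) \ne 0$. The idea is to realize $M$ as a $G$-stable, $k$-split direct summand of a finitely generated free $G$-module, and then transport the separating invariant supplied by geometric reductivity. Since $M$ is projective over $k$, I would choose a finitely generated $k$-module $M'$ with $M \oplus M' \cong k^n$; equipping $M'$ with the trivial $G$-action makes $W \coloneqq M \oplus M'$ a $G$-module whose underlying $k$-module is free, with $G$-equivariant inclusion $\iota \colon M \hookrightarrow W$ and $G$-equivariant retraction the projection $W \twoheadrightarrow M$.

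With this enlargement in hand, I would base change along $k \to F$: because $\iota$ is $k$-split, $\iota \otimes \operatorname{id}_F$ stays injective, so $\iota(v)$ is a nonzero element of $(W \otimes_k F)^{G_F}$. Applying geometric reductivity to the free $G$-module $W$ and the invariant $\iota(v)$ then produces $d > 0$ and $g \in S^d(W^{\ast})^G$ with $g(\iota(v)) \ne 0$. Dualizing, $\iota^{\ast} \colon W^{\ast} \twoheadrightarrow M^{\ast}$ is $G$-equivariant by Lemma \ref{dual_proj} (both $M$ and $W$ are finitely generated projective over $k$, so the duals and their symmetric powers carry their natural $G$-module structures, per Remark \ref{alg_operation}), whence $S^d(\iota^{\ast}) \colon S^d(W^{\ast}) \to S^d(M^{\ast})$ is $G$-equivariant. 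Setting $f \coloneqq S^d(\iota^{\ast})(g) \in S^d(M^{\ast})^G$ and reading $f$ as the restriction of the polynomial function $g$ along $\iota$, one obtains $f(v) = g(\iota(v)) \ne 0$, which is exactly the required separation.

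The one genuinely delicate point, and where I expect the hypotheses to enter, is producing the free $G$-module $W$ together with a $G$-equivariant, $k$-split embedding of $M$, so that the separating invariant descends from $W$ to $M$ rather than merely existing on $W$. The trivial-complement construction above achieves this directly from projectivity of $M$ over $k$; alternatively, one may build such an enlargement from the $G$-equivariant resolution property (Lemma \ref{lemma:thomason_equivariant})—which is precisely what the regularity and Krull-dimension-at-most-two hypotheses guarantee—by applying it to $M^{\ast}$ and dualizing. In either route, the crux is bridging the gap between projective and free $G$-modules while keeping every map $G$-equivariant and $k$-split, so that the invariant polynomial transfers intact.
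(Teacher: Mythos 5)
Your proposal is correct and follows essentially the same route as the paper's own proof: embed the finitely generated projective module $M$ as a direct summand of a finite free module $W$ with trivial $G$-action on the complement, apply Seshadri's definition to $W$, and push the separating invariant down via the $G$-equivariant surjection $S^d(\iota^{\ast})\colon S^d(W^{\ast})\to S^d(M^{\ast})$ from Lemma \ref{dual_proj}. The only difference is cosmetic: the paper does not spell out the injectivity of $\iota\otimes\operatorname{id}_F$ or mention the alternative via the equivariant resolution property, both of which you note explicitly.
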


\begin{proof}
    The forward direction is obvious. We will show the converse here.
    Let~$V$ be a ~$G$-module that is finitely generated and projective over~$k$. 
    Then~$V$ is a direct summand of some finitely generated free module~$W$, i.e.,~$W:= V \oplus I$ for some~$k$-module~$I$. 
    We endow~$W$ with a~$G$-module structure that is compatible with that on~$V$ by letting~$G$ act on ~$I$ trivially and act on~$V$ as before.

    Let~$F$ be a field that is also a~$k$-algebra on which~$G$ acts trivially. By Definition~\ref{seshadri_gr}, for all~$v \in (V \otimes_k F)^{G_F} \subseteq (W \otimes_k F)^{G_F}$, there exists a positive integer~$d$ and~$f \in S^d(W^{\ast})^{G}$, such that~$f(v)\neq 0$. 
    Moreover, by Lemma~\ref{dual_proj}, the inclusion map of~$G$-modules~$i: V \hookrightarrow W$ induces a~$G$-module homomorphism~$\pi: W^{\ast} \rightarrow V^{\ast}$. Applying~$S^d(\cdot)$ further induces~$S^d(\pi): S^d(W^{\ast}) \rightarrow S^d(V^{\ast}).$ Since~$S^d(\pi)((S^d(W^{\ast}))^{G}) \subseteq S^d(V^{\ast})^G$, there exists~$h\colonequals S^d(\pi)(f) \in S^d(V^{\ast})^G$ such that~$h(v)\neq 0$.
\end{proof}

To compare the notion geometric reductivity to the other notions of reductivity, we may compare the notion of strongly geometric reductivity to the rest. 

\begin{lemma}~\label{res_dual_vsp}
    Let~$V$ be a free module over a integral domain~$D$ and let~$F$ be a field that contains~$D$. Let~$f \in (V \otimes_D F)^{\ast}$ be nonzero. Then~$f\vert_{V}$ is nonzero.
\end{lemma}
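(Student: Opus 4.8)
The plan is to reduce the statement to a direct computation with a basis. Since $V$ is free over $D$, I would fix a $D$-basis $\{e_i\}_{i \in I}$ of $V$. The first step is to observe that extension of scalars along $D \hookrightarrow F$ carries this to an $F$-basis of $V \otimes_D F$: from the standard isomorphism $\left(\bigoplus_{i} D e_i\right) \otimes_D F \cong \bigoplus_i F\,(e_i \otimes 1)$, the family $\{e_i \otimes 1\}_{i \in I}$ is an $F$-basis of $V \otimes_D F$. Here the symbol $f\vert_V$ is to be read as the precomposition of $f$ with the canonical map $V \to V \otimes_D F$, $v \mapsto v \otimes 1$, so that $(f\vert_V)(v) = f(v \otimes 1)$.

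With this in hand the argument is immediate. An $F$-linear functional $f \in (V \otimes_D F)^{\ast}$ is determined by its values on the basis $\{e_i \otimes 1\}_{i\in I}$, since every element of $V \otimes_D F$ is a finite $F$-linear combination of these basis vectors; hence $f \neq 0$ forces $f(e_j \otimes 1) \neq 0$ for at least one index $j \in I$. As $e_j \otimes 1$ is precisely the image of the basis vector $e_j \in V$ under the canonical map, this yields $(f\vert_V)(e_j) = f(e_j \otimes 1) \neq 0$, whence $f\vert_V \neq 0$, as desired.

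There is essentially no genuine obstacle here: the only point requiring care is the bookkeeping that the image of a $D$-basis of $V$ really is an $F$-basis of $V \otimes_D F$, which is exactly the fact that tensoring a free module with a ring extension preserves freeness with the corresponding basis. I would note that the freeness of $V$ is doing all the work; the hypotheses that $D$ be an integral domain and $F$ a field containing it (which also guarantee that $V \to V \otimes_D F$ is injective, since $F$ then contains $\operatorname{Frac}(D)$) are not strictly needed for this particular claim, but they are what make the lemma applicable in the surrounding argument for (strong) geometric reductivity.
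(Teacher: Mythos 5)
Your proof is correct and follows essentially the same route as the paper: pass to the induced $F$-basis $\{e_i \otimes 1\}$ of $V \otimes_D F$ and observe that a nonzero functional must be nonzero on some basis vector, which lies in the image of $V$. Your added remark that freeness is the only hypothesis genuinely used (the domain/field assumptions merely ensure injectivity of $V \to V \otimes_D F$ for the surrounding application) is accurate.
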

\begin{proof}
    Let~$\mathcal{B}$ be a basis for~$V$ over~$D$. Then~$\mathcal{B}' = \{v \otimes 1 \vert v \in \mathcal{B}\}$ is an~$F$-basis for~$V \otimes_D F.$ If~$f \in (V \otimes_D F)^{\ast}$ is nonzero, then there exists~$v \otimes 1 \in \mathcal{B}'$, s.t.,~$f(v \otimes 1) \neq 0$. Therefore,~$f\vert_{V}(v) \neq 0$ for this~$v \in \mathcal{B}$. Hence,~$f\vert_V \neq 0.$
\end{proof}

\begin{prop}\label{PR_GR}
    If~$k$ is a regular local ring of Krull dimension at most two, a strongly geometrically reductive group $G$ is power reductive.
\end{prop}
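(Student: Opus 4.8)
The plan is to produce, for a given surjection $\varphi\colon M\twoheadrightarrow k$ of $G$-modules, a positive integer $d$ and an invariant $w\in S^d(M)^G$ with $S^d(\varphi)(w)=1$. The line $k\cdot w$ is then a $G$-stable complement to $\ker(S^d(\varphi))$, which is exactly the splitting demanded by Definition \ref{PR_1}, so it suffices to manufacture such a $w$.

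First I would reduce to the case where $M$ is finitely generated and projective over $k$. Since $\varphi$ is surjective, choose $m\in M$ with $\varphi(m)=1$; by local finiteness (Lemma \ref{locally_finiteness}) $m$ lies in a $G$-submodule $M_0\subseteq M$ that is finitely generated over $k$, and $\varphi$ restricts to a surjection $M_0\twoheadrightarrow k$. By the $G$-equivariant resolution property (Lemma \ref{lemma:thomason_equivariant}) there is a finitely generated projective $G$-module $P$ together with a $G$-equivariant map $q\colon P\twoheadrightarrow M_0\hookrightarrow M$ whose image is $M_0$. Writing $\psi=\varphi\circ q\colon P\twoheadrightarrow k$, functoriality of $S^d$ gives $S^d(\varphi)\circ S^d(q)=S^d(\psi)$, so an invariant $w'\in S^d(P)^G$ with $S^d(\psi)(w')=1$ yields $w\coloneqq S^d(q)(w')\in S^d(M)^G$ with $S^d(\varphi)(w)=1$. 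Hence we may assume $M$ is finitely generated projective and $\varphi$ is our surjection.

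The key idea is to dualize and feed the resulting invariant vector into strong geometric reductivity applied to $N\coloneqq M^{\ast}$. Indeed $\varphi\in\Hom_k(M,k)^G=(M^{\ast})^G=N^G$ is a nonzero invariant. Let $\mathfrak m$ be the maximal ideal of $k$ and $\kappa=k/\mathfrak m$ the residue field, which is a $k$-algebra that is a field. Base changing along $k\to\kappa$ gives $\bar\varphi\in (N\otimes_k\kappa)^{G_\kappa}=((M\otimes_k\kappa)^{\ast})^{G_\kappa}$, where I use that duals commute with base change for finitely generated projective modules and that invariance is preserved under base change; moreover $\bar\varphi\neq 0$ since $\varphi$ stays surjective after the right-exact functor $(-)\otimes_k\kappa$. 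Applying Definition \ref{GR_Proj} to the finitely generated projective $G$-module $N$, the field $\kappa$, and the nonzero invariant $v=\bar\varphi$, I obtain a positive integer $d$ and $f\in S^d(N^{\ast})^G$ with $f(v)\neq 0$ in $\kappa$. As $M$ is finitely generated projective, $N^{\ast}=M^{\ast\ast}\cong M$ as $G$-modules, so $f\in S^d(M)^G$. Viewing $f\in S^d(N^{\ast})=S^d(M)$ as a degree-$d$ homogeneous function on $N=M^{\ast}$, its value at $\varphi\in N$ equals $S^d(\varphi)(f)\in k$ (both are the product of the $\varphi$-values of the linear factors of $f$), and the reduction of $S^d(\varphi)(f)$ modulo $\mathfrak m$ is precisely $f(\bar\varphi)\neq 0$. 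Therefore $S^d(\varphi)(f)\notin\mathfrak m$; since $k$ is local this element is a unit, and rescaling $w\coloneqq S^d(\varphi)(f)^{-1}f$ gives $S^d(\varphi)(w)=1$, as desired.

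The main obstacle, and the reason the argument is arranged this way, is the gap between what geometric reductivity supplies — an invariant that is merely \emph{nonzero} on $v$ — and what a splitting requires, namely an invariant mapping to a \emph{generator} of $k$. Over a field these coincide, but over a local ring a nonzero element can lie in $\mathfrak m$. The device of invoking strong geometric reductivity over the residue field $\kappa$ (rather than, say, the fraction field) converts ``nonzero in $\kappa$'' into ``unit in $k$'', which is exactly what locality buys us; the regularity and dimension hypotheses enter only to guarantee the equivariant resolution property used in the reduction step.
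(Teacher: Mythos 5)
Your proposal is correct and follows essentially the same route as the paper: reduce to a finitely generated projective module via local finiteness and the equivariant resolution property, dualize so that the surjection $\varphi$ becomes an invariant vector over the residue field, apply strong geometric reductivity to produce $f\in S^d(M)^G$ with nonzero image in the residue field, and use locality of $k$ to upgrade ``nonzero mod $\mathfrak m$'' to ``unit,'' yielding the $G$-stable complement $k\cdot f$. The only cosmetic difference is that the paper keeps the composite $N\twoheadrightarrow M\twoheadrightarrow k$ throughout and deduces the splitting of $S^d(M)\to S^d(k)$ from that of $S^d(N)\to S^d(k)$ at the end, whereas you replace $M$ by its projective cover at the outset.
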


\begin{proof} The following proof closely mirrors the proof of \cite[Lemma 13]{PR_vdK} in which~$k$ is a discrete valuation ring.

    Let~$G$ be a strongly geometrically reductive group scheme over~$k$ with residue field~$F$. Let~$M$ be a~$G$-module over~$k$ and let~$\phi\colon M \twoheadrightarrow k$ be a surjective~$G$-module homomorphism. Choose~$m \in M$ such that~$\phi(m) = 1$. Since~$M$ is locally finite, we may assume without loss of generality that~$M$ is finitely generated. By the assumptions on~$G$ and~$k$,~$(G, \operatorname{Spec}(k))$ satisfies the~$G$-equivariant property (\cite[Lemma 2.5]{THOMASON1987}), so there exists a~$G$-module~$N$ that is finitely generated and projective over~$k$ such that~$N \overset{\psi}{\twoheadrightarrow} M$.
    
    Let~$v$ denote the following composition map of~$G$-modules:
    \begin{alignat*}{4}
        v \colon N  \overset{\psi}{\longrightarrow} &M & \overset{\phi}{\longrightarrow} k & \overset{q}{\longrightarrow} F \\
                  &  m                     & \longmapsto 1                     & \longmapsto 1
    \end{alignat*} Then we must have~$v\in \Hom_F(N \otimes_k F, F)^{G_F} = (N^{\ast} \otimes_k F)^{G_F}$. By applying the definition of strongly geometrically reductive, there exists~$f \in S^d((N^{\ast})^{\ast})^{G} \cong S^d(N)^{G}$, such that~$f(v) \neq 0$. Here the isomorphism follows from the fact that~$N$ is finitely generated and projective and therefore reflexive.

    Moreover, applying the symmetric~$d$-th power~$S^d(\cdot)$ gives
    \begin{equation*}
        S^d(v) \colon S^d(N) \overset{S^d(\psi)}{\longrightarrow} S^d(M) \overset{S^d(\phi)}{\longrightarrow} S^d(k) \overset{S^d(q)}{\longrightarrow} S^d(F)
    \end{equation*} Therefore,~$f(v) = S^d(v)(f) = S^d(q)\circ S^d(\phi\circ\psi)(f) \neq 0$. Since~$k$ is local,~$S^d(\phi \circ \psi)$ maps~$f$ to the unique (nonzero) lift of~$f$ under~$S^d(q)$. Moreover,~$S^d(\phi\circ\psi)(f) = u\cdot x^d$, where~$u$ is a unit of~$k$ and~$x^d$ is a generator of~$S^d(k)$. 
    Therefore, the set~$\{c\cdot f\mid c \in k\}$ is a~$G$-stable complement to~$\operatorname{ker}(S^d(\phi\circ\psi))$, so~$S^d(N) \rightarrow S^d(k)$ splits, whence~$S^d(M) \rightarrow S^d(k)$ also splits.
\end{proof}

To prove the converse, we first show that strong geometric reductivity is a local property.

\begin{theorem}\label{global_to_local}
    Let~$k$ be a regular ring of Krull dimension at most two. If~$G$ is strongly geometrically reductive over~$k$, then~$G$ is strongly geometrically reductive over~$k_{\mathfrak{p}}$, for all prime ideals~$\mathfrak{p}$.
\end{theorem}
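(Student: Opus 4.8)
The plan is to show that strong geometric reductivity descends to $k_{\mathfrak{p}}$ by realizing every finitely generated projective $G_{k_{\mathfrak{p}}}$-module as a quotient of the localization of a \emph{global} finitely generated projective $G$-module over $k$, and then transporting the relevant nonzero invariant \emph{as a functional}, so that Definition~\ref{GR_Proj} can be applied over $k$ and its output pushed back to $k_{\mathfrak{p}}$. The functional viewpoint is essential: mirroring the proof of Proposition~\ref{PR_GR}, a surjection lets one pull back invariant functionals contravariantly, so we never need to lift invariants or split sequences $G$-equivariantly (which would amount to the reductivity we are trying to establish).

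Fix a finitely generated projective $G_{k_{\mathfrak{p}}}$-module $M$, a field $F$ that is a $k_{\mathfrak{p}}$-algebra (hence a $k$-algebra), and a nonzero $v \in (M \otimes_{k_{\mathfrak{p}}} F)^{G_F}$; we must produce $d$ and $f \in S^d(M^{\ast})^{G_{k_{\mathfrak{p}}}}$ with $f(v) \neq 0$. Set $L \coloneqq M^{\ast}$. Since $M$ is finitely generated projective it is reflexive, and over the field $F$ duality commutes with base change, so $v$ corresponds to a nonzero $G_F$-equivariant functional $v \colon L \otimes_{k_{\mathfrak{p}}} F \to F$.

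\textbf{The key step} — which I expect to be the main obstacle — is the following surjection claim: there exist a finitely generated projective $G$-module $N$ over $k$ and a $G_{k_{\mathfrak{p}}}$-equivariant surjection $q \colon N_{k_{\mathfrak{p}}} \twoheadrightarrow L$. To prove it I would not try to descend $L$ to $k$ (which generally fails), but instead pass through the restriction of scalars $L_k$, which is a $G$-module over $k$ and is locally finite by Lemma~\ref{locally_finiteness} (as $k$ is Noetherian and $G$ is flat). By Proposition~\ref{base_change_representation} there is a $G_{k_{\mathfrak{p}}}$-isomorphism $L_k \otimes_k k_{\mathfrak{p}} \cong L$, so a finite generating set of $L$ over $k_{\mathfrak{p}}$ lies inside $L_k$ and hence, by local finiteness, inside a finitely generated $G$-submodule $T \subseteq L_k$ over $k$; the induced $G_{k_{\mathfrak{p}}}$-map $T_{k_{\mathfrak{p}}} \to L$ is then surjective. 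Finally, since $k$ is regular of Krull dimension at most two, the $G$-equivariant resolution property (Lemma~\ref{lemma:thomason_equivariant}) supplies a finitely generated projective $G$-module $N$ over $k$ with a $G$-surjection $N \twoheadrightarrow T$; base changing along the flat map $k \to k_{\mathfrak{p}}$ and composing gives the desired $q \colon N_{k_{\mathfrak{p}}} \twoheadrightarrow L$.

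\textbf{Finishing.} With $q$ in hand the rest is formal. Pulling the functional $v$ back along $q$ gives a nonzero $G_F$-invariant functional $v' \coloneqq v \circ q_F$ on $N$, that is, a nonzero element of $(N^{\ast} \otimes_k F)^{G_F}$. Applying strong geometric reductivity over $k$ to the finitely generated projective $G$-module $N^{\ast}$ yields $d$ and $h \in S^d\big((N^{\ast})^{\ast}\big)^{G} \cong S^d(N)^{G}$ with $S^d(v')(h) \neq 0$. Since taking invariants commutes with the flat base change $k \to k_{\mathfrak{p}}$, the image $h_{k_{\mathfrak{p}}}$ is $G_{k_{\mathfrak{p}}}$-invariant, and I set $f \coloneqq S^d(q)(h_{k_{\mathfrak{p}}}) \in S^d(L)^{G_{k_{\mathfrak{p}}}} = S^d(M^{\ast})^{G_{k_{\mathfrak{p}}}}$, using that $S^d$ preserves the $G$-module structure and carries surjections to surjections (Remark~\ref{alg_operation}). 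Compatibility of $S^d$ with composition and with base change then gives $f(v) = S^d(v)\big(S^d(q)(h_{k_{\mathfrak{p}}})\big) = S^d(v')(h) \neq 0$, as required. The only points demanding care are the bookkeeping identifications $M \cong M^{\ast\ast}$, $(L \otimes F)^{\ast} \cong L^{\ast} \otimes F$ and $(N^{\ast})^{\ast} \cong N$, and the verification that each transport map is genuinely $G$-equivariant, via Lemma~\ref{dual_proj} and Remark~\ref{alg_operation}.
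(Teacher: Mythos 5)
Your argument is correct and follows the same overall route as the paper's proof: pass to the dual $M^{\ast}$, restrict scalars to $k$, use local finiteness plus the equivariant resolution property (Lemma~\ref{lemma:thomason_equivariant}) to produce a finitely generated projective $G$-module over $k$ mapping to $M^{\ast}$ after localizing, apply strong geometric reductivity over $k$ to the pulled-back invariant functional, and push the resulting invariant of $S^d$ forward to $S^d(M^{\ast})^{G_{k_{\mathfrak{p}}}}$. The one genuine difference is how non-vanishing is secured. The paper's Step 1 locates a single element $m^{\ast}\in M^{\ast}$ on which $v_F$ does not vanish (via Lemma~\ref{res_dual_vsp} and factoring the base change through the image of $k_{\mathfrak{p}}$ in $F$), and then only needs the finitely generated $G_k$-submodule of $(M^{\ast})_k$ to contain that one element; the non-vanishing of the transported invariant then has to be checked by chasing $m^{\ast}$ through the diagram. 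You instead choose the finitely generated $G_k$-submodule $T\subseteq (M^{\ast})_k$ to contain a full $k_{\mathfrak{p}}$-generating set of $M^{\ast}$, so that $N_{k_{\mathfrak{p}}}\twoheadrightarrow M^{\ast}$ is surjective and the pullback $v\circ q_F$ of the nonzero functional $v$ is automatically nonzero. This costs nothing (a finite generating set is still a finite subset of $(M^{\ast})_k$, so local finiteness applies verbatim) and buys you the elimination of Lemma~\ref{res_dual_vsp} and of the paper's Step 1 entirely; the remaining bookkeeping with reflexivity, $(N\otimes_k F)^{\ast}\cong N^{\ast}\otimes_k F$, flat base change of invariants, and functoriality of $S^d$ is handled exactly as in Proposition~\ref{PR_GR} and the paper's Step 3.
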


\begin{proof} 
    Fix a prime ideal~$\mathfrak{p}$. Let~$M$ be a ~$G_{k_{\mathfrak p}}$-module that is finitely generated and projective over $k_{\mathfrak p}$, let~$F$ a~$k_{\mathfrak p}$-algebra that is a field, and let~$v_{F}\in (M \otimes_{k_{\mathfrak{p}}} F)^{G_F}$. To prove the theorem, we must show that there exist a positive integer~$d$ and an element~$h\in S^d_{k_{\mathfrak{p}}}(M^{\ast})^{G_{k_{\mathfrak{p}}}}$ such that~$h(v_F) \neq 0$. We break up the proof into three steps.

    \textbf{Step 1:} \textit{We show that there exists~$m^{\ast} \in M^{\ast}$ such that~$v_F$, viewed as an element of~$((M \otimes_{\loc{k}} F)^{\ast})^{\ast}$, maps~$m^{\ast}\otimes 1\in M^{\ast} \otimes_{\loc{k}} F$ to a nonzero element of~$F$.}

    By Lemma~\ref{dual_proj}, both the dual module~$M^{\ast}$ and the double dual module~$M^{\ast \ast}$ are finitely generated and free and have compatible~$G$-module structures. Moreover,~$M$ is reflexive, i.e.,~$M^{\ast\ast} \cong M$. This isomorphism is canonical, and so is~$G$-equivariant. Therefore, we have the isomorphisms~$M \otimes_{\loc{k}} F \cong M^{\ast \ast} \otimes_{\loc{k}} F\cong (M^{\ast} \otimes_{k_{\mathfrak{p}}} F)^{\ast}$,
    which are~$G$-equivariant by Lemma~\ref{dual_proj}. Thus we can view~$v_F\in M \otimes_{\loc{k}} F$ as a map~$v_F : M^{\ast} \otimes_{\loc{k}} F \rightarrow F$. Let~$D$ denote the image of~$k_{\mathfrak{p}}$ under the residue map~$k_{\mathfrak{p}} \to F$, so~$D$ is an integral domain. Then the base change of~$M^{\ast}$ to~$F$ factors through~$D$:
    \begin{equation*}
        \operatorname{id} \otimes_{k_{\mathfrak{p}}} F \colon M^{\ast} \overset{q}{\longtwoheadrightarrow}  M^{\ast} \otimes_{\loc{k}} D \overset{\iota}{\longhookrightarrow} M^{\ast} \otimes_{\loc{k}} F,
    \end{equation*} 
    where~$\iota \coloneqq \operatorname{id}\otimes_{D} F$ is injective because~$M^\ast$ is flat over~$\loc{k}$. Extension of scalars preserves~$G$-module structures on~$M^{\ast} \otimes_{\loc{k}} D$ and~$M^{\ast} \otimes_{\loc{k}} F$. By Lemma~\ref{res_dual_vsp} and the surjectivity of~$q$, there exists~$m^\ast\in M^\ast$ such that~$v_F(\iota \circ q(m^{\ast})) = v_F((\operatorname{id} \otimes_{k_{\mathfrak{p}}} F)(m^{\ast}))\neq 0$.

    \textbf{Step 2:} 
    \textit{We construct a ~$G_k$-submodule~$V$ of ~$(M^{\ast})_k$ containing~$m^{\ast}$ that is finitely generated over $k$. By the existence of equivariant resolution, we construct a ~$G_k$-module~$W$ that is finitely generated over $k$ such that~$W^{\ast} \twoheadrightarrow V \hookrightarrow (M^{\ast})_k$, and~$\pi(v_F) \in (W \otimes_k F)^{G_F}$ is not zero, where~$\pi$ is induced by the the map~$W^{\ast} \rightarrow (M^{\ast})_k$ as in Lemma~\ref{dual_proj}.}

    Recall that~$(M^{\ast})_k$ denotes~$M^{\ast}$ viewed as a~$k$-module via restriction of scalars, so~$(M^{\ast})_k$ inherits the~$G$-module structure.
    By the assumption on~$G$, any~$G_k$-module is locally finite (\cite[Lemma 1]{seshadri}), so there exists a finitely generated~$G_k$-submodule~$V \subseteq (M^{\ast})_k$ containing~$m^\ast$. Moreover, by Lemma \ref{lemma:thomason_equivariant}, there is a finitely generated and projective~$G$-module~$N$ and a~$G$-equivariant surjective homomorphism~$N \twoheadrightarrow V \subseteq (M^{\ast})_k$. Let~$W:= N^{\ast}$. By Lemma~\ref{dual_proj},~$W$ is finitely generated and projective over $k$ with a~$G$-module structure compatible with that on~$N$. Similarly, same assertion holds for~$W^{\ast} = N^{\ast\ast} \cong N$.

    The composition of maps~$W^{\ast} \overset{\sim}{\to} N \twoheadrightarrow V \hookrightarrow (M^{\ast})_k$ induces~$\alpha: W^{\ast}\otimes_k \loc{k} \rightarrow (M^{\ast})_k \otimes_k \loc{k}$. By Lemma \ref{dual_proj},~$W^{\ast} \otimes_k k_{\mathfrak{p}} \cong (W \otimes_k k_{\mathfrak{p}})^{\ast}$ and by Proposition \ref{base_change_representation},~$(M^{\ast})_k \otimes_k \loc{k}  \cong M^{\ast}$, and both of the isomorphisms preserve the~$G$-module structures.
    Now, consider the induced~$G$-equivariant map on the dual modules~$\alpha^{\ast}: M^{\ast\ast} \cong M \to (W \otimes_k k_{\mathfrak{p}})^{\ast\ast} \cong W \otimes_k k_{\mathfrak{p}}$,
    which further induces~$\pi:= \alpha^{\ast}\otimes_{k_{\mathfrak{p}}} F: M \otimes_{k_{\mathfrak{p}}} F \to W \otimes_k F$.

    Recall from Step 1 that~$v_F \in (M \otimes_{\loc{k}} F)^{G_F}$ and~$v_F((\operatorname{id} \otimes_{k_{\mathfrak{p}}} F)(m^{\ast})) \neq 0$ for some~$m^{\ast} \in M^{\ast}$. We now claim that~$\pi(v_F) \in (W \otimes_k F)^{G_F}$ and~$\pi(v_F)\neq 0$. The first part of claim is obvious since~$v_F \in (M \otimes_{\loc{k}} F)^{G_F}$. For second part of the claim, since~$W^{\ast} \twoheadrightarrow V \subseteq (M^{\ast})_k$, there exists a preimage~$w^{\ast} \in W^{\ast}$ of~$m^{\ast} \in V \subseteq (M^{\ast})_k$ as above, and then from the following commutative diagram
    \begin{center}
        \begin{tikzcd}
            &(W \otimes_{k} \loc{k})^{\ast} \arrow[r, rightarrow, "\alpha"]\arrow{d}{\operatorname{id} \otimes_{k_{\mathfrak{p}}} F} & M^{\ast}\arrow{d}{\operatorname{id} \otimes_{k_{\mathfrak{p}}} F}\\
            & (W \otimes_{k} \loc{k})^{\ast} \otimes_{\loc{k}} F \arrow{r}{\alpha \otimes_{\loc{k}} F} & M^{\ast} \otimes_{\loc{k}} F,
        \end{tikzcd}
    \end{center}
    we see that
    \begin{align*}
        \pi(v_F)((\operatorname{id} \otimes_{k_{\mathfrak{p}}} F)(w^{\ast})) &= v_F\circ (\alpha \otimes_{\loc{k}} F) ((\operatorname{id} \otimes_{k_{\mathfrak{p}}} F)(w^{\ast})) \\ &= v_F((\operatorname{id} \otimes_{k_{\mathfrak{p}}} F)\circ \alpha (w^{\ast}))  \\
        &= v_F ((\operatorname{id} \otimes_{k_{\mathfrak{p}}} F) (m^{\ast}))\neq 0.
    \end{align*}

    \textbf{Step 3:} \textit{We apply strong geometric reductivity using projective modules to construct~$d \in \mathbb{Z}_{+}$ and~$f\in S^d_{k}(W^{\ast})^{G_{k}}$ such that~$f(\pi(v_F)) \neq 0.$ Then we show its image~$g \in S^d_{k_{\mathfrak{p}}}(M^{\ast})^{G_{k_{\mathfrak{p}}}}$ satisfies~$g(v_F)\neq 0$.}

    Let~$\alpha: (W \otimes_k k_{\mathfrak{p}})^{\ast} \rightarrow M^{\ast}$ be the~$G$-module homomorphism as constructed in Step 2, and applying~$S^d(\cdot)$ induces a~$G$-module homomorphism~$S^d(\alpha)\colon S^d_{\loc{k}}((W \otimes_k \loc{k})^{\ast}) \longrightarrow S^d_{\loc{k}}(M^{\ast})$.
    Since~$G$ is strongly geometrically reductive over~$k$, there exists a positive integer~$d$ and~$f \in S^d_k(W^{\ast})^{G_k}$ such that~$f(\pi(v_F)) \neq 0$. By \cite[Lemma 2]{seshadri},
   ~$S^d_k(W^{\ast})^{G_k} \otimes_k k_{\mathfrak{p}} \cong S^d_{k_{\mathfrak{p}}} ((W \otimes_k k_{\mathfrak{p}})^{\ast})^{G_{k_{\mathfrak{p}}}}$, so~$f \in S^d_{k_{\mathfrak{p}}} ((W \otimes_k k_{\mathfrak{p}})^{\ast})^{G_{k_{\mathfrak{p}}}}$. Therefore,~$g \colonequals S^d(\alpha)(f) \in S^d_{\loc{k}}(M^{\ast})^{G_{\loc{k}}}$. Then we show that~$g(v_F)\neq 0$. As in the proof of Proposition \ref{PR_GR}, an element~$v_F \in (M \otimes_{\loc{k}} F)^{G_F}$ is a~$G$-equivariant map~$v_F \colon M^{\ast} \rightarrow k \rightarrow F$, so~$\pi(v_F) = v_F \circ \alpha$ is the map 
    \begin{equation*}
        \pi(v_F) \colon (W \otimes_k \loc{k})^{\ast} \overset{\alpha}{\longrightarrow} M^{\ast} \longrightarrow k \longrightarrow F,
    \end{equation*} which further induces
    \begin{equation*}
        S^d(\pi(v_F)) \colon S^d((W \otimes_k \loc{k})^{\ast}) \overset{S^d(\alpha)}{\longrightarrow} S^d(M^{\ast}) \longrightarrow S^d(k) \longrightarrow S^d(F).
    \end{equation*} Hence,
    \begin{equation*}
        g(v_F) = S^d(v_F)(g) = S^d(v_F)(S^d(\alpha)(f)) = S^d(v_F\circ \alpha)(f) = S^d(\pi(v_F))(f) = f(\pi(v_F)) \neq 0.
    \end{equation*}

    Hence, for any~$G_{k_{\mathfrak{p}}}$-module~$M$, finitely generated and projective over $k_{\mathfrak{p}}$, and~$v \in (M \otimes_{k} F)^{G_F}$, there exists a positive integer~$d$ and~$g \in S^d_{k_{\mathfrak{p}}}(M^{\ast})^{G_{k_{\mathfrak{p}}}}$ such that~$g(v) \neq 0$.
\end{proof}

Then we are able to prove the converse of Proposition \ref{PR_GR}.
\begin{cor}\label{GR_implies_PR}
    Let~$k$ be a regular ring of Krull dimension at most two. If~$G$ is geometrically reductive over~$k$, then~$G$ is power reductive over~$k$.
\end{cor}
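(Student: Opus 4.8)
The plan is to establish power reductivity \emph{at every prime} by chaining the three preceding results, and then to globalise by a quasi-compactness argument on $\operatorname{Spec}(k)$. First, since $G$ is geometrically reductive over the regular ring $k$ of Krull dimension at most two, Lemma \ref{strong_GR} upgrades this to strong geometric reductivity over $k$. Theorem \ref{global_to_local} then propagates strong geometric reductivity to every localisation $k_{\mathfrak p}$, which is again regular local of Krull dimension at most two; so Proposition \ref{PR_GR} applies and tells us that $G_{k_{\mathfrak p}}$ is power reductive over $k_{\mathfrak p}$ for every prime $\mathfrak p$.

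The substance of the corollary is then the passage from these local statements to power reductivity over $k$. Fix a surjective homomorphism of $G$-modules $\varphi\colon M \twoheadrightarrow k$. For each $d \geq 1$ I would set
\[
    I_d \coloneqq \operatorname{im}\!\bigl(S^d(M)^G \xrightarrow{\,S^d(\varphi)\,} S^d(k) \cong k\bigr),
\]
a $k$-submodule of $k$, hence an ideal; power reductivity of $G$ over $k$ is precisely the assertion that $I_d = k$ for some single $d$, since $I_d = k$ means there is an invariant $s \in S^d(M)^G$ with $S^d(\varphi)(s) = 1$, i.e. a $G$-stable complement to $\ker(S^d(\varphi))$. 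Because invariants and symmetric powers both commute with the flat base change $k \to k_{\mathfrak p}$, one has $(I_d)_{\mathfrak p} = \operatorname{im}\bigl(S^d(M_{\mathfrak p})^{G_{k_{\mathfrak p}}} \to k_{\mathfrak p}\bigr)$, so local power reductivity at $\mathfrak p$ furnishes an integer $d_{\mathfrak p}$ with $(I_{d_{\mathfrak p}})_{\mathfrak p} = k_{\mathfrak p}$, equivalently $I_{d_{\mathfrak p}} \not\subseteq \mathfrak p$. Hence the open sets $D(I_d) = \{\mathfrak p : I_d \not\subseteq \mathfrak p\}$ cover $\operatorname{Spec}(k)$.

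The hard part is that the degrees $d_{\mathfrak p}$ need not be bounded, so I must extract a \emph{single} global degree. Since $\operatorname{Spec}(k)$ is quasi-compact, finitely many opens suffice: there are $d_1, \dots, d_n$ with $\bigcup_{i} D(I_{d_i}) = \operatorname{Spec}(k)$, equivalently $\bigcap_i V(I_{d_i}) = \varnothing$, i.e. $\sum_i I_{d_i} = k$. To merge these into one degree I would exploit the graded multiplicative structure of $S^{\bullet}(M)$: products of invariants are invariant, and $S^{\bullet}(\varphi)\colon S^{\bullet}(M) \to S^{\bullet}(k) = k[t]$ is a homomorphism of graded $k$-algebras, so the product of invariants of degrees $a$ and $b$ lands in $I_{a+b}$, giving $I_a I_b \subseteq I_{a+b}$ and hence $I_a^{\,m} \subseteq I_{ma}$. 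Setting $D \coloneqq \operatorname{lcm}(d_1,\dots,d_n)$ and $m_i \coloneqq D/d_i$, I get $I_{d_i}^{\,m_i} \subseteq I_D$, while $V(I_{d_i}^{\,m_i}) = V(I_{d_i})$ yields $\sum_i I_{d_i}^{\,m_i} = k$; therefore $I_D = k$.

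This exhibits the required invariant $s \in S^D(M)^G$ with $S^D(\varphi)(s) = 1$, so $S^D(\varphi)$ splits $G$-equivariantly and $G$ is power reductive over $k$ in the sense of Definition \ref{PR_1}. I expect this uniform-degree extraction — converting the pointwise local splittings into one symmetric power $S^D(\varphi)$ that splits — to be the only genuinely delicate step; the rest is a formal concatenation of Lemma \ref{strong_GR}, Theorem \ref{global_to_local}, and Proposition \ref{PR_GR} together with the fact that invariants commute with flat base change.
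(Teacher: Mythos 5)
Your proof is correct and follows essentially the same route as the paper: Lemma \ref{strong_GR} and Theorem \ref{global_to_local} yield (strong) geometric reductivity over each localisation, and Proposition \ref{PR_GR} then gives power reductivity of $G_{k_{\mathfrak p}}$ there. The only difference is that for the final local-to-global step the paper simply cites Section~3.1 of \cite{Franjou_VanderKallen}, whereas you write out the argument (the ideals $I_d$, the containment $I_aI_b\subseteq I_{a+b}$, and quasi-compactness of $\operatorname{Spec}(k)$ to extract a single degree $D$); that argument is correct and is essentially the one in the cited reference.
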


\begin{proof}
    By Lemma \ref{strong_GR} and Theorem \ref{global_to_local},~$G$ is geometrically reductive over~$k_{\mathfrak{m}}$ for all maximal ideals~$\mathfrak{m}.$ Note that~$G_{k_{\mathfrak{m}}}$ is also smooth and has connected geometric fibres, so by Proposition~\ref{PR_GR},~$G_{k_{\mathfrak{m}}}$ is power reductive for all maximal ideals~$\mathfrak{m}$ of~$k$. By Section~3.1 of \cite{Franjou_VanderKallen},~$G$ is power reductive.
\end{proof}

\begin{cor}\label{GR_fibre}
    Let~$G$ be a smooth affine algebraic group scheme with connected geometric fibres over a Noetherian ring $k$. Then ~$G$ is geometrically reductive if and only if all geometric fibres of~$G$ are geometrically reductive. 
\end{cor}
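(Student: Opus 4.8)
The plan is to reduce everything to the classical equivalence, over an algebraically closed field, between geometric reductivity and reductivity, and then to transfer this information between the base~$k$ and its geometric fibres. Recall that over a field~$L$ Definition~\ref{seshadri_gr} is the usual notion of geometric reductivity, and by Nagata--Miyata (\cite{Nagata_Miyata}) and Haboush (\cite{haboush}) a connected affine algebraic group over an algebraically closed field is geometrically reductive if and only if it is reductive. Hence for each~$x \in \operatorname{Spec}(k)$ the geometric fibre~$G \times_k \overline{\kappa(x)}$ is geometrically reductive over~$\overline{\kappa(x)}$ if and only if it is reductive. Since~$G$ is smooth over~$k$ with connected geometric fibres, the hypothesis that all geometric fibres be geometrically reductive is, by the definition of reductivity in Section~\ref{sec_reductive}, exactly the hypothesis that~$G$ be a reductive group scheme over~$k$.

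Given this dictionary, the direction assuming that all geometric fibres are geometrically reductive is immediate: each geometric fibre is then connected and reductive, so~$G$ is reductive over~$k$, and Seshadri's theorem (\cite[Theorem 1]{seshadri}) shows that a reductive group scheme over a Noetherian ring is geometrically reductive. Thus~$G$ is geometrically reductive over~$k$.

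For the converse, suppose~$G$ is geometrically reductive over~$k$, fix~$x$, and write~$L = \overline{\kappa(x)}$. To verify Definition~\ref{seshadri_gr} for~$G_L$ I must produce, for every finite-dimensional~$G_L$-module~$W$, every field~$F/L$ on which~$G$ acts trivially, and every nonzero~$w \in (W \otimes_L F)^{G_F}$, an integer~$d$ and an invariant form~$f \in S^d(W^{\ast})^{G_L}$ with~$f(w) \neq 0$. The mechanism is base change of the defining property: if~$W = V \otimes_k L$ for a finitely generated free~$G_k$-module~$V$, then~$(W \otimes_L F)^{G_F} = (V \otimes_k F)^{G_F}$, geometric reductivity over~$k$ supplies~$f \in S^d(V^{\ast})^{G}$ with~$f(w) \neq 0$, and the image of~$f$ under the base-change map~$S^d(V^{\ast})^{G} \to S^d(W^{\ast})^{G_L}$ has the required property. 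The whole task is therefore to reduce an arbitrary fibre module~$W$ to such a descended module.

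This descent is the step I expect to be the main obstacle: a finite-dimensional~$G_L$-representation need not be the base change of a~$G_k$-module. My plan to handle it mirrors the argument of Theorem~\ref{global_to_local}. Using local finiteness (Remark~\ref{lemma:projective_seshadri}) I would present~$w$, or embed~$W^{\ast}$, inside a finite sum of copies of the regular representation~$L[G_L] = k[G] \otimes_k L$; then the~$G$-equivariant resolution property (Lemma~\ref{lemma:thomason_equivariant}) lets me cover a finitely generated~$G_k$-submodule containing the relevant element by a finitely generated projective~$G_k$-module, and passing to duals via Lemma~\ref{dual_proj} and base changing, exactly as in Step~2 of Theorem~\ref{global_to_local}, realizes the pairing of~$w$ against a form that descends to~$k$. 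Applying geometric reductivity over~$k$ (in its strong form, via Lemma~\ref{strong_GR}) to that descended datum and pushing the resulting invariant form back to~$L$ then yields the required~$f$. Controlling the fibre representation by a module defined over~$k$ is the delicate point, and it is exactly where hypotheses on~$k$ beyond Noetherianity enter, through the availability of the equivariant resolution property.
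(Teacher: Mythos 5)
Your argument for the implication ``all geometric fibres geometrically reductive $\Rightarrow$ $G$ geometrically reductive'' is exactly the paper's: Nagata--Miyata gives that each geometric fibre is reductive, hence $G$ is reductive by the fibrewise definition of reductivity, and \cite[Theorem 1]{seshadri} then gives geometric reductivity over the Noetherian base. No issue there.

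The gap is in the other direction. The corollary assumes only that $k$ is Noetherian, and the paper disposes of this implication with ``obvious from the definition''; you instead build an argument on strong geometric reductivity (Lemma~\ref{strong_GR}) and the equivariant resolution property (Lemma~\ref{lemma:thomason_equivariant}), both of which are available only when $k$ is a regular ring of Krull dimension at most two. You acknowledge this dependence at the end, but that acknowledgement is precisely the problem: as written, your proof does not establish the statement at its stated level of generality. There is also a technical obstruction to transplanting the mechanism of Theorem~\ref{global_to_local}: that proof relies on Proposition~\ref{base_change_representation} (so that $(M^{\ast})_k \otimes_k k_{\mathfrak{p}} \cong M^{\ast}$) and on double-dual identifications over the localization $k_{\mathfrak{p}}$, whereas the geometric fibre lives over $\overline{\kappa(x)}$, which is an algebraic extension of a quotient of a localization; restriction of scalars followed by base change does not recover the fibre module there, so Step~2 of that theorem does not carry over as you suggest. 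To be fair, you have put your finger on a real subtlety --- a finite-dimensional $G_{\overline{\kappa(x)}}$-module need not descend to $k$, so the forward direction is not literally immediate from Definition~\ref{seshadri_gr} --- but the repair you propose both over-assumes on $k$ and leaves that descent step open. You would either need an argument using only the Noetherian hypothesis (e.g.\ reducing the fibre condition to finite-dimensional submodules of $\overline{\kappa(x)}[G]^{n}$ and comparing them with finitely generated $G$-submodules of $k[G]^{n}$), or restrict the claim to the hypotheses under which your tools apply.
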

\begin{proof}
    The forward direction is obvious from the definition. Conversely, if all geometric fibres of~$G$ are geometrically reductive, then all geometric fibres of~$G$ are reductive by \cite{Nagata_Miyata}. Therefore, ~$G$ is reductive and therefore geometrically reductive by \cite[Theorem 1]{seshadri}.
\end{proof}

\begin{cor}\label{PR_R}Let~$k$ be a commutative ring. Let~$G$ be a smooth affine algebraic group scheme over~$k$ with connected geometric fibres. 
    \begin{enumerate}[label =$(\alph*)$]
        \item~$G$ is reductive if and only if it is power reductive.
        \item~$G$ is power reductive if and only if all the geometric fibres of~$G$ are power reductive.
    \end{enumerate}
\end{cor}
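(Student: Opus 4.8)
The plan is to treat this corollary as the synthesis of the machinery already assembled, the crucial input being the over-field dictionary: over an algebraically closed field the notions reductive, geometrically reductive, and power reductive all coincide (Nagata--Miyata \cite{Nagata_Miyata}, Haboush \cite{haboush}, van der Kallen \cite{Kallen2004ARG}). Combined with the fact that reductivity over $k$ is by definition a condition on the geometric fibres, this lets me prove part (a) first and then read off part (b) formally.

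For part (a), the forward implication is immediate: a reductive group is power reductive by \cite[Theorem 12]{Franjou_VanderKallen}. For the converse I would argue fibrewise. Assume $G$ is power reductive. Then $G$ is geometrically reductive by \cite[Lemma 12]{PR_vdK}, so by Corollary \ref{GR_fibre} every geometric fibre $G_{\overline{\kappa(x)}}$ is geometrically reductive, hence reductive over the algebraically closed field $\overline{\kappa(x)}$ by \cite{Nagata_Miyata}. Since $G$ is smooth with connected geometric fibres, the statement that every geometric fibre is reductive is exactly the definition of $G$ being reductive over $k$, which closes the equivalence.

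For part (b) I would simply iterate part (a). First I note that (a) applies verbatim to each geometric fibre $G_{\overline{\kappa(x)}}$, which is smooth over the algebraically closed field $\overline{\kappa(x)}$ and, being over an algebraically closed field, has connected geometric fibre; thus over each fibre ``power reductive'' and ``reductive'' agree. Then the chain $G$ power reductive over $k$ $\iff$ $G$ reductive over $k$ (by (a)) $\iff$ all geometric fibres reductive (by definition) $\iff$ all geometric fibres power reductive (by (a) applied fibrewise) yields exactly (b).

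The genuinely nontrivial step is the implication ``geometrically reductive $\implies$ geometrically reductive on all geometric fibres,'' namely Corollary \ref{GR_fibre}, which in turn rests on the paper's Theorem \ref{global_to_local} that strong geometric reductivity is a local property; everything else is definition- and citation-chasing. The two points that require genuine care are (i) checking that base change to a geometric fibre preserves smoothness and connectedness, so that the definition of reductivity and the over-field equivalences actually apply on the fibre, and (ii) reconciling the Noetherian hypothesis of Corollary \ref{GR_fibre} with the general commutative base of the statement, which I would handle using the local and base-change behaviour of power reductivity recorded in \cite[Section 3.1]{Franjou_VanderKallen}.
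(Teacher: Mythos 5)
Your overall strategy---reduce to the geometric fibres and invoke the over-field dictionary---is sound in spirit, but your route to the converse of part (a) has a genuine problem of generality. You pass through Seshadri-style geometric reductivity: power reductive $\Rightarrow$ geometrically reductive by \cite[Lemma 12]{PR_vdK}, then geometrically reductive on all geometric fibres by Corollary \ref{GR_fibre}. However, Definition \ref{seshadri_gr} and Corollary \ref{GR_fibre} are only formulated over a \emph{Noetherian} base, whereas Corollary \ref{PR_R} is asserted for an arbitrary commutative ring. You flag this mismatch, but the repair you propose---localizing and citing \cite[Section 3.1]{Franjou_VanderKallen}---does not fix it, since a localization of a non-Noetherian ring is still non-Noetherian. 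You have also misattributed a dependency: the direction of Corollary \ref{GR_fibre} you use is the one the paper dispatches as ``obvious from the definition''; it does not rest on Theorem \ref{global_to_local}, which concerns localization at prime ideals, requires $k$ regular of Krull dimension at most two, and feeds into Corollary \ref{GR_implies_PR}, not here. If your argument genuinely needed Theorem \ref{global_to_local}, the corollary would only be established in that much more restricted setting.

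The paper runs the implications in the opposite order and stays entirely within power reductivity, which is defined over any commutative base: part (b) is essentially the fibrewise criterion for power reductivity, \cite[Proposition 16]{PR_vdK}, combined with \cite{Nagata_Miyata} to translate between the notions over the (algebraically closed) residue fields; part (a) then follows by citation of \cite[Theorem 5]{Franjou_VanderKallen} together with \cite[Proposition 16]{PR_vdK}---in effect, reductive $\Rightarrow$ power reductive is Franjou--van der Kallen's Mumford-conjecture theorem, and conversely power reductive $\Rightarrow$ power reductive on all geometric fibres $\Rightarrow$ reductive on all geometric fibres $\Rightarrow$ reductive by definition. The fix for your write-up is therefore to prove (b) first, directly from \cite[Proposition 16]{PR_vdK}, and deduce (a) from it, rather than deriving (b) from an (a) whose proof detours through the Noetherian-only geometric-reductivity results.
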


\begin{proof} Part (a) is immediate from \cite[Theorem 5]{Franjou_VanderKallen} and \cite[Proposition 16]{PR_vdK}. Part (b) is immediate from \cite[Proposition 16]{PR_vdK} and  \cite{Nagata_Miyata}.
\end{proof}

\subsection{Relationship to Linearly Reductive Groups}\label{Sec_relation}

After we have studied the equivalence relations of reductive, power reductive and geometrically reductive groups over rings, we want to see how they relate to linearly reductive groups over rings. We start by looking at the special case of power reductive groups when~$d = 1.$

\begin{defn}
    Let~$G$ be a flat affine algebraic group scheme over a commutative ring~$k$. We say ~$G$ is \emph{power reductive of degree one} if~$d = 1$ in Definition \ref{PR_1}.
\end{defn}

\begin{prop}\label{PR_deg_one_property} Let~$G$ be a flat affine algebraic group scheme over a commutative ring.
    The following two statements are equivalent:
    \begin{enumerate}[label =$\left(\alph*\right)$]
        \item~$G$ is power reductive of degree one.
        \item~$G$ is linearly reductive.
    \end{enumerate}
\end{prop}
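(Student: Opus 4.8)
The plan is to recognize that, once the definitions are unwound, both conditions assert the same thing: that every short exact sequence of $G$-modules $0 \to V \to E \to k \to 0$ with trivial quotient $k$ splits $G$-equivariantly. First I would put the degree-one hypothesis in concrete form. Since $S^1(M) = M$, $S^1(k) = k$, and $S^1(\varphi) = \varphi$, Definition \ref{PR_1} with $d = 1$ says precisely that every surjective $G$-module homomorphism $\varphi \colon M \twoheadrightarrow k$ has $\ker(\varphi)$ admitting a $G$-stable complement, i.e. $\varphi$ splits in the category of $G$-modules.

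The key structural input is the interpretation of $H^1$. Because $(\cdot)^G = \Hom_G(k, -)$ for the trivial module $k$, and the category of $G$-modules is abelian with enough injectives, the Hochschild group $H^1(G, V)$ is canonically $\Ext^1_G(k, V)$ (cf. \cite[I.4]{Jantzen}); by the Yoneda description of $\Ext^1$ (\cite{wiebel}) it is in natural bijection with the equivalence classes of extensions $0 \to V \to E \to k \to 0$ of $G$-modules, the zero class corresponding exactly to the split extensions. It is essential here that the right-hand term be the trivial module $k$, since power reductivity of degree one is a statement about surjections onto $k$.

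With this in hand both implications are immediate and symmetric. For (b) $\Rightarrow$ (a): given $\varphi \colon M \twoheadrightarrow k$, set $V = \ker(\varphi)$; the resulting extension has class in $H^1(G, V) = 0$ by linear reductivity, so it splits and $\ker(\varphi)$ has a $G$-stable complement, which is power reductivity of degree one. For (a) $\Rightarrow$ (b): for an arbitrary $G$-module $V$ and an arbitrary class in $H^1(G, V)$, represent it by an extension $0 \to V \to E \xrightarrow{\varphi} k \to 0$; the surjection $\varphi$ onto the trivial module splits by hypothesis, so the class vanishes. As $V$ and the class were arbitrary, $H^1(G, V) = 0$ for all $V$, i.e. $G$ is linearly reductive.

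The main obstacle is setting up the extension dictionary rigorously: confirming that the Hochschild $H^1(G, V)$ agrees with derived-functor $\Ext^1_G(k, V)$ and hence with Yoneda extension classes, and that under this correspondence ``splits as $G$-modules'' matches ``zero class.'' If one prefers to avoid this machinery, the same content can be made explicit by attaching to a $1$-cocycle $c \in Z^1(G, V)$ the $G$-module $E = V \oplus k$ with comodule structure twisted by $c$, and checking that the cocycle condition yields coassociativity, that $E \twoheadrightarrow k$ is $G$-equivariant, and that this surjection splits $G$-equivariantly if and only if $c \in B^1(G, V) = \im(\partial_0)$. This last equivalence is where the genuine verification lies; everything else is a matter of matching quantifiers.
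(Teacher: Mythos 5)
Your proposal is correct, but it takes a genuinely different route from the paper's. The paper never invokes the extension-class interpretation of $H^1$: for (a)~$\Rightarrow$~(b) it fixes a surjection $\varphi\colon V \twoheadrightarrow W$ and an invariant $w \in W^G$, pulls back the cyclic submodule $k\cdot w$ to $M = \varphi^{-1}(k\cdot w)$, and applies degree-one power reductivity to produce an invariant preimage of $w$, concluding that $(\cdot)^G$ preserves surjections; for (b)~$\Rightarrow$~(a) it simply lifts $1 \in k = k^G$ to an element of $M^G$, whose span is the required $G$-stable complement of $\ker(\varphi)$. Both directions of the paper's argument thus run through the characterization of linear reductivity as exactness of the invariants functor, leaving the translation back to the stated definition $H^1(G,-)=0$ implicit. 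Your version instead works directly with Definition~\ref{def:LRG} via the identification $H^1(G,V) \cong \Ext^1_G(k,V)$ and the Yoneda dictionary (equivalently, the cocycle-twisted extension $E_c = V \oplus k$), which is available here because the category of $G$-modules is abelian with enough injectives and the Hochschild complex computes the derived functors of $(\cdot)^G$ (cf.\ \cite[I.4]{Jantzen}, \cite[Section 2]{Margaux2009}). What your route buys: it concentrates all the homological content in one standard dictionary, engages the definition of linear reductivity head-on, and sidesteps a wrinkle in the paper's (a)~$\Rightarrow$~(b), namely that $k\cdot w$ is only cyclic and need not be free, so Definition~\ref{PR_1} with target $k$ does not literally apply to $M \twoheadrightarrow k\cdot w$. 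What the paper's route buys: it is elementary and element-theoretic, and avoids the one piece of genuine verification your plan defers --- coassociativity and the counit axiom for the twisted comodule, and ``splits iff the cocycle is a coboundary'' --- which you correctly identify as the crux and which should be carried out, or precisely referenced, in a final write-up.
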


\begin{proof}
    $(a) \Rightarrow (b)$: Let~$V, W$ be~$G$-modules over~$k$ and let ~$\varphi: V \twoheadrightarrow W$ be a~$G$-module epimorphism. Take~$w \in W^G$. Let~$L$ be the cyclic module generated by~$w$, i.e.,~$L = k\cdot w.$ Then~$L \subseteq W.$ Let~$M:= \varphi^{-1}(L) \subseteq V$. By the definition of power reductive of degree one, there exists~$v \in M^G \subseteq V^G$ such that~$\varphi(v) = w$. That is,~$\varphi\mid_{V^G}: V^G \twoheadrightarrow W^G$ is surjective, whence ~$G$ is linearly reductive.

    $(b) \Leftarrow (a)$: Let~$\varphi: M \twoheadrightarrow L$, where~$L$ is a cyclic module on which~$G$ acts trivially. Then~$\varphi$ induces~$M^G \twoheadrightarrow L^G = L$. This shows that~$G$ is power reductive of degree one.
\end{proof}

\subsection{Summary}
Let~$G$ be a flat affine algerbaic group scheme over a commutative ring~$k$.
In the previous subsections, we have seen that when~$G$ is smooth and with connected geometric fibres, there are the following relations between different notions of reductivity: 

    \begin{tikzcd}[trim left = -7.3cm]
        &\textnormal{Linearly Reductive} \arrow[ddd, Leftrightarrow, shift right = 15, bend right = 60, dashed]\arrow[r, Rightarrow, " \textnormal{Proposition } \ref{LRG_base_change}"] \arrow[d, Leftrightarrow, "\textnormal{Proposition \ref{PR_deg_one_property}}"]&\textnormal{Linearly Reductive over Geometric Fibres}\arrow[d, Leftrightarrow, "\textnormal{Definition}"]\arrow[ddd, Leftrightarrow, shift left = 40, bend left = 50, dashed]\\
        &\textnormal{Power Reductive of Degree One} \arrow[r, Rightarrow] \arrow[d, Rightarrow, "\textnormal{Definition}"]&\textnormal{Power Reductive of Degree One over Geometric Fibres}\arrow[d, Rightarrow, "\textnormal{\cite{Nagata_Miyata}}"]\\
        &\textnormal{Power Reductive} \arrow[r, Leftrightarrow, "\textnormal{\cite[Prop 16]{PR_vdK},Corollary \ref{PR_R}}"] \arrow[d, Leftrightarrow, "\textnormal{\cite[Theorem 12]{Franjou_VanderKallen}, Corollary \ref{PR_R}}"]&\textnormal{Power Reductive over Geometric Fibres}\arrow[d, Leftrightarrow, "\textnormal{\cite{Nagata_Miyata}}"]\\
        &\textnormal{Reductive} \arrow[r, Leftrightarrow, "\textnormal{Definition}"] &\textnormal{Reductive over Geometric Fibres}\\
    \end{tikzcd}

Note that we have the reverse arrows for the first two rows, if we assume that~$k$ is Noetherian \cite[Theorem 1.2(i)]{Margaux2009}. The dashed arrows are true when~$\operatorname{char}(k) = \operatorname{char}(\kappa(\mathfrak{p})) = 0,$ for all~$\mathfrak{p}$ prime ideals (See \cite{Nagata_Miyata}). If there exists a geometric fibre of positive characteristic, the only connected linearly reductive groups over~$k$ are tori. Indeed, the only connected linearly reductive groups over fields of positive characteristic are tori (See \cite{Nagata61}).

If we further assume that~$k$ is a regular ring of dimension at most two, we can add the following to the diagram:

    \begin{tikzcd}[trim left = -7.3cm]
        &\textnormal{Reductive} \arrow[r, Leftrightarrow, "\textnormal{Definition}"] \arrow[d, Leftrightarrow, "\textnormal{\cite[Theorem 1]{seshadri}} \textnormal{Corollary \ref{GR_implies_PR},\ref{PR_R}}"]&\textnormal{Reductive over Geometric Fibres}\arrow[d, Leftrightarrow, "\textnormal{\cite{Nagata_Miyata}}"]\\
        &\textnormal{Geometrically Reductive} \arrow[r, Leftrightarrow, "\textnormal{Cor \ref{GR_fibre}}"] &\textnormal{Geometrically Reductive over Geometric Fibres}\\
    \end{tikzcd}

\subsection*{Author Information} \hfill

\bigskip

\noindent Yidi Wang

\noindent Department of Mathematics, University of Western Ontario, ON, Canada. 

\noindent Email: ywan6443@uwo.ca

\end{document}